\numberwithin{equation}{subsection}
\newtheorem{theorem}{Theorem}[section]
\newtheorem{lemma}[theorem]{Lemma}
\newtheorem{proposition}[theorem]{Proposition}
\theoremstyle{definition}
\newtheorem{defn}{Definition}
\theoremstyle{remark}
\newcommand{\pdiv}{\mathscr{G}}
\newcommand{\lift}{\tilde{\pdiv}}
\newcommand{\liftphi}{\tilde{\phi}}
\newcommand{\F}{\mathbb{F}}
\newcommand{\Q}{\mathbb{Q}}
\newcommand{\Qbar}{\overline{\Q}}
\newcommand{\Z}{\mathbb{Z}}
\newcommand{\ol}[1]{\overline{#1}}
\newcommand{\Fpbar}{\overline{\mathbb{F}}_p}
\newcommand{\Pone}{\mathbb{P}^1}
\newcommand{\bbP}{\mathbf{P}}
\newcommand{\cA}{\mathcal{A}}
\newcommand{\cD}{\mathcal{D}}
\newcommand{\cL}{\mathcal{L}}
\newcommand{\cO}{\mathcal{O}}
\newcommand{\fm}{\mathfrak{m}}
\newcommand{\Ag}{\mathcal{A}_g}
\newcommand{\Agr}{\Ag[r]}
\newcommand{\Agrtor}{\Agr^{\tor}}
\newcommand{\Mg}{\mathcal{M}_g}
\newcommand{\ra}{\rightarrow}
\newcommand{\Gmhat}{\hat{\mathbb{G}}_m}
\DeclareMathOperator{\GL}{GL}
\DeclareMathOperator{\GSp}{GSp}
\DeclareMathOperator{\Sp}{Sp}
\DeclareMathOperator{\Gl}{GL}
\DeclareMathOperator{\tor}{tor}
\DeclareMathOperator{\mult}{mult}
\DeclareMathOperator{\et}{et}
\DeclareMathOperator{\Sym}{Sym}
\DeclareMathOperator{\Gal}{Gal}
\DeclareMathOperator{\Hom}{Hom}
\DeclareMathOperator{\Res}{Res}
\DeclareMathOperator{\Spec}{Spec}
\DeclareMathOperator{\Spf}{Spf}
\DeclareMathOperator{\ord}{ord}
\DeclareMathOperator{\Id}{Id}
\newcommand{\ds}{\displaystyle}
\begin{document}
\title{Abelian varieties not isogenous to Jacobians over global fields}
\author{Ananth N. Shankar and Jacob Tsimerman}

\date{}

\maketitle

\begin{abstract}
We prove the existence of abelian varieties not isogenous to Jacobians over characterstic $p$ function fields. Our methods involve studying the action of degree $p$ Hecke operators on hypersymmetric points, as well as their effect on the formal neighborhoods using Serre-Tate co-ordinates. Moreover, we use our methods to provide another proof over number fields, as well as proving a version of this result over finite fields.
\end{abstract}

\section{Introduction}

This paper concerns the following question: \emph{Given an algebraically closed field $F$ and $g\geq 4$, does there exist an abelian variety over $F$ of dimension $g$ which is not isogenous to the Jacobian of a stable curve?} The history of the question, as far as we know, is that it was first asked by Nicholas Katz and Frans Oort in the setting of $F=\Qbar$, and then posed more generally by Ching-Li Chai and Oort in \cite{chaioortjacobian}. We mention that an observation of Poonen (unpublished, but credited by Chai and Oort), which helped clarify the situation, was to formulate the question more generally for an arbitrary subvariety of $\Ag$ in place of the Torelli locus. Of course, one can also generalize from $\Ag$ to an arbitrary Shimura variety.

In the case of $F=\Qbar$ the question has been answered in the affirmative: Chai-Oort \cite{chaioortjacobian} proved it conditional on the Andr\'e-Oort conjecture for $\cA_g$, and then the second-named author made their proof unconditional \cite{jacobabvarjacob}. Since then the Andr\'e-Oort conjecture itself has been proved \cite{jacobandreoort} in this case, removing the need for \cite{jacobabvarjacob}. Recently, Masser-Zannier \cite{masserzannier} proved a stronger result, producing abelian varieties not isogenous to Jacobians which are defined over low-degree fields, and in fact showing that `most' such abelian varieties satisfy the condition, when ordered by height in a precise sense.

We now turn our attention to the case of positive characteristic, about which much less is known. Based on arithmetic statistics, we have prior work \cite{ananthjacob} which suggests that the statement should not have affirmative answer for $g\geq 4$.\footnote{For $g\leq 9$, we expect every ordinary abelian variety over $\Fpbar$ to be isogenous to a Jacobian.} In fact, \cite{ananthjacob} suggests a more general result. We first make the following definition: 
\begin{defn}\label{defgenericallyordinary}
Let $X\subset \Ag$ denote a subvariety over a field $k$ of characteristic $p>0$. We say that $X$ is generically ordinary if every irreducible component of $X$ has non-empty intersection with the ordinary locus of $\Ag\times k$. 
    
\end{defn}

Let $D\subset \Ag$ be a generically ordinary divisor defined over $\Fpbar$ (with $g>1$). Then, we expect that every $x\in \Ag(\Fpbar)$ should be isogenous to some $y\in D(\Fpbar)$. Bjorn Poonen has similar heuristics in an unpublished pre-print, which agree with our expectations. 

In the way of unconditional results, there is only a result of Chai-Oort \cite[\S4]{chaioortjacobian} which deals with the analogous situation of a curve inside $X(1)^2$ (which produces non-ordinary points) and a result of the authors \cite[Thm 4.1]{ananthjacob} for a specific hypersurface in $X(1)^N$ for $N\geq 270$, using additive combinatorics. Relatedly, work of the first author, Asvin G. and Q. He \cite{hilbertsurfaces} shows that if $C_1,C_2$ are two generically ordinary curves contained in a Hilbert modular surface associated to a real quadratic field split at $p$, then there are infinitely many closed points on $C_1$ isogenous to some point on $C_2$. Asvin G. in \cite{Asvin} proves results similar to (and stronger than) those in \cite{hilbertsurfaces} pertaining to just-likely and unlikely intersections on products of modular curves.

The setting most analogous to number fields is function fields in one variable, and thus the natural analogue to $\Qbar$ is $\ol{\F_p(T)}$. The main goal of this paper is to deal with precisely that case.  
Our main theorem is as follows:
%Incidentally, this answers the question in the affirmative for all algebraically closed fields.\footnote{In the theorem below, the case where $k$ has positive transcendence degree over $\F_p$ follows formally by a standard `spreading out' argument.}

%The goal of this paper is to deal with the most interesting case where it can still be reasonably expected that the question has an affirmative answer for all $g\geq 4$: namely, that of $\ol{\F_p(T)}$, which incidentally answers the question for all algebraically closed fields different from $\Fpbar$.

\begin{theorem}\label{thm:main1}
Let $F=\ol{\F_p(t)}$, and $g>1$. Let $D\subset\Ag/F$ be a divisor. There exists an $F$-valued point of $\Ag$ which is not isogenous to any $F$-point of $D$. 
\end{theorem}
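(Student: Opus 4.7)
The plan is to construct the required $F$-point inside the formal neighborhood of an ordinary hypersymmetric point $x_0$ of $\mathcal{A}_g$, and to show using Serre--Tate coordinates and the explicit action of degree-$p$ Hecke correspondences that the isogeny class of this point over $F$ avoids the divisor $D$.

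First I would fix $x_0 \in \mathcal{A}_g(\overline{k})$ corresponding to the hypersymmetric abelian variety $A_0 \sim E^g$ for an ordinary CM elliptic curve $E/\overline{k}$. By Serre--Tate, the formal completion of $\mathcal{A}_g$ at $x_0$ is canonically a formal torus $\widehat{\mathbb{G}}_m^N$ with $N = g(g+1)/2$. Because $\End(A_0) \otimes \mathbb{Q} \cong M_g(\End(E) \otimes \mathbb{Q})$ is as large as possible, one obtains a rich collection of degree-$p$ Hecke correspondences $T$ preserving this formal neighborhood and acting on it by group endomorphisms, i.e., as monomial transformations $(q_1, \ldots, q_N) \mapsto (\prod_j q_j^{a_{ij}^T})$ of the Serre--Tate coordinates. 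I would then produce $x \in \mathcal{A}_g(F)$ by prescribing Serre--Tate parameters $q_i \in F^\times$ that are algebraic over $k(t)$ with $q_i \equiv 1$ at a place of $F$ chosen to lie over $x_0$.

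For each word $T$ in the Hecke correspondences, the translate $T(x)$ has Serre--Tate parameters that are explicit monomials in the $q_i$, and the condition $T(x) \in D$ translates into the vanishing at $(q_i)$ of a formal power series $f_T \in \overline{k}[[q_1 - 1, \ldots, q_N - 1]]$ --- the local equation of $D$ at $T(x_0)$ pulled back along $T$, which is non-zero because $D$ is a proper divisor. We are then reduced to the pure avoidance problem of choosing $(q_i)$ algebraic over $k(t)$, congruent to $1$ at our place, and lying outside every one of the countably many loci $\{f_T = 0\}$.

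The heart of the proof and the main obstacle is this avoidance step, because $F = \overline{k(t)}$ has transcendence degree one over $\overline{k}$, so a single parameter must beat countably many analytic conditions --- naive dimension or cardinality arguments cannot fill the $N$-dimensional formal torus. The approach I would take is to consider a one-parameter family of the form $q_i(t) = 1 + c_i t + \cdots$ with $(c_i) \in \overline{k}^N$ to be chosen: for each $T$, the leading coefficient of $f_T \circ T$ along this family is the initial homogeneous form of $f_T$ evaluated at a linear form in $(c_i)$ prescribed by the monomial exponents of $T$, and the arithmetic structure of the Hecke semigroup (which is where the hypersymmetric hypothesis enters) should allow a careful selection of $(c_i)$ avoiding all these initial forms simultaneously, by a Baire/diagonal argument over the countable set of Hecke correspondences, with higher-order corrections controlled by taking $t$ to be a suitable generic algebraic value.
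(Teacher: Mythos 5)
Your proposal shares the paper's starting point---hypersymmetric point $x_0$, Serre--Tate coordinates, the monomial/diagonal action of degree-$p$ Hecke operators on the formal torus---but the argument you build on top of it has gaps that I don't see how to repair. First, you never actually produce an $F$-point of $\mathcal{A}_g$. The formal disk $\widehat{\mathcal{A}}_g^{x_0}=\Spf\,\overline{k}[[q_1-1,\ldots,q_N-1]]$ has no $F$-points other than $x_0$ itself (the maximal ideal must land in nilpotents of $F$, i.e.\ in $0$); what is needed is an honest algebraic curve $C\subset(\mathcal{A}_g)_{\overline{k}}$ whose formal germ at $x_0$ realizes the chosen direction, and whose \emph{generic} point is then the desired $F$-point. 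Algebraizing a formal germ into such a $C$ is where the paper invokes Bertini over finite fields (Theorem~\ref{congmain}, via Poonen and Charles--Poonen), and at the same time imposes full geometric monodromy so that every Hecke translate $T(C)$ stays irreducible; without that irreducibility, one branch of $C$ could separate from the others under a Hecke operator and your bookkeeping at the finitely many hypersymmetric points falls apart. Second, your avoidance step is not merely hard but cannot work as stated: $\overline{k}$ is countable (algebraic closure of a finite field), so $\overline{k}^N$ is countable, and a countable set is a countable union of singletons---there is no Baire or cardinality argument that produces an element of $\overline{k}^N$ missing a prescribed countable family of subsets. The paper sidesteps this entirely: Proposition~\ref{notalltori} shows by a power-series degree estimate that a divisor $D$ cannot contain \emph{all} primitive rank-$1$ Serre--Tate subtori through a hypersymmetric point, which produces a surviving direction to a fixed finite order $M$; the compatibility of $p$-power Hecke with these directions (Theorem~\ref{thm:ppowerheckesmooth}) then closes the loop, with no diagonalization needed.

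Third, and most decisively for Theorem~\ref{thm:main1} specifically, your proposal silently assumes $D$ is defined over $\overline{k}$: you speak of the local equation of $D$ as a power series in $\overline{k}[[q_1-1,\ldots]]$. But Theorem~\ref{thm:main1} permits $D$ to be defined over all of $F=\overline{k(t)}$, and that enlargement is exactly the content the theorem adds beyond Theorem~\ref{thm:main}. The paper handles it by a reduction absent from your sketch: spread $D$ to a family $\mathcal{D}\subset(\mathcal{A}_g\times\mathbb{P}^1)_k$, take the constant part $E\subset(\mathcal{A}_g)_{\overline{k}}$, prove a uniformity bound (Lemma~\ref{uniform}) saying $M$ generic fibers of $\mathcal{D}$ already intersect down to $E$, and then base change a good curve from the $E$-case along a degree-$N$ map $\mathbb{P}^1\to\mathbb{P}^1$ with Galois group $A_N$, chosen so that Goursat/Lemma~\ref{galois} preserves big monodromy. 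Then any Hecke translate contained in $\mathcal{D}$ is forced into $E$ by the uniformity lemma, contradicting the $E$-case. Your proposal has no mechanism to descend from $D/F$ to a divisor over $\overline{k}$, and that is the step the theorem is really about.
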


As a stepping stone to our main theorem, we first prove the following (which is sufficient for the case of the Torelli locus):

\begin{theorem}\label{thm:main}
Let $F=\ol{\F_p(t)}$, and $g>1$. Let $D\subset\Ag/\F_q$ be a divisor, where $q$ is a power of $p$. There exists an $F$-valued point of $\Ag$ which is not isogenous to any $F$-point of $D$.  
\end{theorem}
The heuristics offered in \cite{ananthjacob} suggest that Theorem \ref{thm:main} is the strongest possible theorem that is true in positive characteristic; namely, that $\ol{\F_p(t)}$ is the smallest algebraically closed field over which one can expect the existence of an abelian variety not isogenous to any point on $D$. 

Of course, one may instead formulate the above theorem in terms of curves contained in $\Ag$ by using the familiar translation between curves and function fields.

\subsection{Other results}
Theorem \ref{thm:main} yields yet another proof of the number field version of Theorem \ref{thm:main} (namely, the existence of abelian varieties over number fields not isogenous to Jacobians):

\begin{theorem}\label{new number field}
Let $D\subset \cA_g$ denote a divisor defined over $\overline{\mathbb{Q}}$. Then there exists a Hodge-generic point $x\in \cA_g(\overline{\mathbb{Q}})$ not isogenous to any point $y\in D$. 
\end{theorem}

This method also yields the following result over finite fields: 

\begin{theorem}\label{finitefields}
Let $D\subset \Ag$ denote a divisor over $\Fpbar$. Then, there exists an ordinary $x\in \Ag(\Fpbar)$ such that $\tau(x) \not\subset D$ for every prime-to-$p$ Hecke operator $\tau$. 
\end{theorem}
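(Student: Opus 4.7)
The plan is to reduce Theorem \ref{finitefields} to Theorem \ref{thm:main} via a specialization argument. Since $D$ is defined over some finite subfield $k \subset \Fpbar$, applying Theorem \ref{thm:main} to this $k$ and $D$ yields a point $y \in \Ag(F)$, with $F = \overline{k(t)}$, that is not isogenous to any $F$-valued point of $D$. The point $y$ is defined over a finitely generated subfield of $F$, so it spreads out to a non-constant morphism $\phi_y \colon U \to \Ag$ from a smooth affine curve $U$ over a finite extension $k'/k$, with generic fibre $y$. Assuming $y$ is generically ordinary (as should be arranged in the construction behind Theorem \ref{thm:main}), I can shrink $U$ so that $\phi_y$ factors through the ordinary locus $\Ag^{\mathrm{ord}}$. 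Then for every closed point $u \in U$, the image $x_u := \phi_y(u) \in \Ag(\Fpbar)$ is ordinary.

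For each prime-to-$p$ Hecke correspondence $T$, the pullback $T_U := U \times_{\Ag,\pi_1} T$ is finite étale over $U$, and
\[
S_T := \{u \in U : T(x_u) \subset D\}
\]
is Zariski closed. If $S_T = U$, then passing to the generic fibre yields $T(y) \subset D \times_{\Fpbar} F$, so $y$ would be isogenous to $F$-points of $D$, contradicting Theorem \ref{thm:main}. Hence each $S_T$ is a proper closed subset of the curve $U$, i.e., a finite set of closed points, and intersection theory on $T_U$ bounds $|S_T|$ in terms of $\deg(T)$, $\deg(D)$, and the degree of $\phi_y(U)$ in $\Ag$.

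The main obstacle is then to produce a closed point $u \in U$ outside the countable union $\bigcup_T S_T \subset U(\Fpbar)$. Since $U(\Fpbar)$ is itself countable, a naive cardinality argument is insufficient and one must argue quantitatively. My plan is to estimate, for each $m \geq 1$, the size of $\bigcup_T S_T \cap U(\F_{p^m})$ as a polynomial in $p^m$: the number of Hecke correspondences of degree $\leq N$ is polynomial in $N$, each $|S_T|$ grows linearly in $\deg(T)$, and Frobenius-equivariance of the prime-to-$p$ Hecke action combined with $\F_{p^m}$-rationality of $x_u$ should constrain the degree of any $T$ witnessing $u \in S_T$ in terms of $p^m$. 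Comparing against the Lang--Weil estimate $|U(\F_{p^m})| \sim p^m$, for $m$ sufficiently large many closed points of $U$ lie outside $\bigcup_T S_T$, producing the ordinary $x_u \in \Ag(\Fpbar)$ demanded by the theorem. The delicate step, which I anticipate to be the main technical obstacle, is making the bound on $\deg(T)$ in terms of $p^m$ uniform; this is where the Frobenius action on the prime-to-$p$ Tate module of $x_u$ must be leveraged, and it is analogous to the role played by formal-neighborhood analysis via Serre--Tate coordinates in the proof of Theorem \ref{thm:main}.
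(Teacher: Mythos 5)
Your overall scaffolding --- reduce to Theorem~\ref{thm:main}, realize the function-field point as the generic fibre of a curve $C \subset \Ag$ over $\Fpbar$ with maximal monodromy, and then look for a closed point $x \in C(\Fpbar)$ lying outside the countable union of the closed loci $S_T = \{u : T(u)\subset D\}$ --- matches the skeleton of the paper's proof. The gap is in how that countable union is controlled, and it is a missing idea rather than the ``delicate step'' you suggest. You propose to bound, for $u \in U(\F_{p^m})$, the degree of any prime-to-$p$ Hecke correspondence $T$ with $T(x_u)\subset D$ as a function of $m$, and then compare with Lang--Weil. But no mechanism is offered for such a bound, and none is available: there is no a priori constraint preventing a prime-to-$p$ Hecke operator of arbitrarily large degree from carrying a fixed ordinary $\F_{p^m}$-point entirely into $D$. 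Frobenius-equivariance and $\F_{p^m}$-rationality of $x_u$ do not produce one, so the Lang--Weil comparison never gets started.

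The paper closes this gap by a cancellation, not a bound on $\deg(T)$, following the intersection-theoretic argument of Theorem~\ref{mainnumberfield}. The key identity is Equation~\eqref{equationupperbound}: $(T(C)\cdot D) = \deg(T)\,(C\cdot D)$. Because $C$ is built with maximal monodromy (Theorem~\ref{congmain}), $T(C)$ is irreducible and $T(x)$ consists of roughly $\deg(T)/c$ points for an absolute constant $c$. If $T(x)\subset D$ for some $x$ with minimal field of definition $\F_q$, then the same holds for every Frobenius conjugate $\sigma(x)$ of $x$ over the field of definition of $D$, and each such conjugate contributes at least $\deg(T)/c$ branches of $T(C)$ lying in $T(C)\cap D$ (coincident points still count with multiplicity, since $T(C)$ is singular there). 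This gives the lower bound $(T(C)\cdot D) \geq \deg(T)\cdot[\F_q : \F_p]/c$. Comparing with the upper bound, the $\deg(T)$ factors cancel and one obtains $[\F_q : \F_p] \leq c\,(C\cdot D)$, \emph{independent of $T$}. Hence any closed point $x\in C(\Fpbar)$ whose degree exceeds $c\,(C\cdot D)$ satisfies $T(x)\not\subset D$ for every prime-to-$p$ Hecke correspondence at once. This trade --- a uniform bound on the degree of $x$, rather than a $T$-dependent bound on $\deg(T)$ --- is precisely the idea your proposal is missing, and it is also why the paper needs to construct $C$ with maximal monodromy explicitly rather than invoking Theorem~\ref{thm:main} only through its generic fibre.
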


%We remark that the results of 
Theorem \ref{finitefields} acts as a substitute over $\Fpbar$, because as stated before we expect that every ordinary $x\in \Ag(\Fpbar)$ is isogenous to some point in $D(\Fpbar)$.  

We remark that Theorem \ref{finitefields} also highlights one of the differences/difficulties in handling the $\Fpbar$ case: The Galois orbits are just much smaller, so that $\tau(x)$ cannot be a single Galois orbit, unlike the case of number fields or function fields.

\subsection{Ideas of Proofs}
\subsubsection{Sketch of Proof of Theorem \ref{thm:main}}

Fix a divisor $D\subset(\Ag)_{\F_q}$. Our first observation is that in the function field case one can impose very strong `local' conditions\footnote{It is unclear to us whether one can impose analogously strong local conditions  over number fields.} on our $F$-point, which prohibits such a point from being contained in $D$. Thinking of $F$-valued points as curves $C\subset\Ag$, this amounts to constructing a curve $C$ which is highly singular at a given point $Q$, such that the formal neighborhood of $C$ at $Q$ contains much of the formal neighborhood of $\Ag$ at $Q$. There are some difficulties with implementing this strategy, namely: 
\begin{enumerate}

    \item While the local structure of prime-to-$p$ Hecke operators is very well understood, $p$-power Hecke operators behave poorly in characteristic $p$. These are not \'etale (or even finite) and strongly distort the local structure. %For instance, $p$-power Hecke correspondences are neither finite, nor \'etale, and this makes the local analysis difficult. 
    
    \item It is a priori unclear how to insist that every \emph{irreducible} component of a Hecke operator applied $C$ also has this property. A priori, different branches of $C$ through a point $x$ could separate upon applying a Hecke operator.

\end{enumerate}

We overcome the second difficulty by insisting that the curve has maximal monodromy by using Lefschetz theorems in conjunction with Bertini-style theorems due to Poonen and Charles-Poonen. Indeed, if $C$ has maximal monodromy, then $\tau(C)$ is itself irreducible (this is well known, but we include a proof in Section 5 for completeness - see Proposition \ref{prop:heckeirreducible}). The first difficulty is therefore the main one, and our idea in overcoming it is to use \emph{hypersymmetric} abelian varieties (see Definition \ref{hypersymmetricabvars} for the definition). The $p$-power Hecke orbit of a hypersymmetric point consists of points defined over the same field as the original point (similar in spirit to the prime-to-$p$ Hecke orbit of a supersingular point). 
In particular, the orbit of a hypersymmetric point under $p$-power isogenies is finite.

This prevents `escape to infinity' and allows us to impose certain local conditions on this entire finite set all at once, which we can then control. To impose our local conditions, we use the bilinear structure on the tangent space (and in fact the entire formal neighborhood) inherited from the Serre-Tate co-ordinates to isolate certain favored directions which behave stably under $p$-power isogenies.

We remark that in an unpublished preprint, Poonen is able to establish the existence of an abelian variety over a global function field not isogenous by a \emph{prime-to-}$p$ isogeny to any Jacobian. His method relies on a careful analysis of the $p$-adic geometry of prime-to-$p$ Hecke operators on $\cA_g$. 
\subsubsection{Sketch of Proof of Theorem \ref{thm:main1}}

It turns out that the hardest case of this theorem is Theorem \ref{thm:main}. Our approach is essentially to notice that $\Fpbar(T)$ contains many distinct subfields - in fact, copies of itself: $\Fpbar(P(T))$ for polynomials $P$. One can show that if $D$ is defined over some subfield $E\subset F$, then we may find a smaller subfield $E'\subset E$ such that the intersection of $D$ with its own Galois conjugates over $E'$ is defined over $\Fpbar$. But if $C$ is a curve defined over $E'$, then $C\subset D$ implies that $C$ is contained in the intersection of $D$ with all its Galois conjugates over $E'$. One must be a bit careful, and we work with curves instead of function fields, but this idea can essentially be pushed through to reduce the proof of Theorem \ref{thm:main1} to Theorem \ref{thm:main}. 

\subsubsection{Theorem \ref{new number field}}

This same idea can be carried out (with a lot less difficulty, as we no longer face any characteristic $p$ issues) to find a curve $C\subset \Ag$ none of whose Hecke translates are contained in $D$, where $C$ is now a curve defined over a \emph{Number Field}. We use the existence of such a curve, along with a strong "big monodromy" result due to Zywina, to provide an intersection-theoretic proof of the existence of Number Field-valued points of $\Ag$ that are not isogenous to any point of $D$. This method also goes through to prove Theorem \ref{finitefields}.

We remark that Masser-Zannier in \cite{masserzannier} have established Theorem \ref{new number field} by using Hodge-generic points, and in addition, show that ``most'' points of $\cA_g$ satisfy this property. We remark that they need less control on the monodromy as compared to our proof, which they use in conjunction with the Pila-Zannier method to establish their result. Specifically, they only need the monodromy to be large enough such that under an isogeny of degree $n$ the degree of the field of definition grows by a power of $n$. Once they have this, they use the Pila-Wilkie method to create algebraic curves in the Siegel domain and then use transcendence theory.

By contrast, our method requires for the monodromy to be maximal with almost no `wiggle room', but the proof method does not go through functional transcendence theory and instead uses intersection theory through a Bezout-argument.

We further remark that both methods can be used to effectively bound the degree of the field of definition of the resulting points, and to obtain infinitely many points of such degree which are not isogenous to any point in the given divisor.

\subsection{Structure of Paper}
Section 2 is devoted to preliminaries.
In section 3 we show how to construct abelian varieties over curves
with prescribed local and monodromy conditions. In section 4 we review the theory of Serre-Tate coordinates, introduce the notion of primitive Serre-Tate directions and study how they behave under isogenies. This is the technical heart of the paper. We then prove Theorem 1.2 and Theorem 1.1 in Section 5. Finally, in Section 6 we give a couple of other results: we explain how our methods can give a rather short proof of the same result over number fields (although relying on a strong monodromy theorem of Zywina) and we prove Theorem \ref{finitefields}.

\subsection{Acknowledgements}
The first author is grateful to Bjorn Poonen and Umberto Zannier for useful conversations. The second author is grateful to Frans Oort for introducing him to the problem and for many useful discussions. We are grateful to Daniel Litt for providing us with a reference for tame Lefschetz theorems. We are also grateful to the referees for extremely careful readings and their comments which have truly helped improve this paper. A.S. was partially supported by NSF grant DMS-2100436, NSF career grant DMS-2338942, and a Sloan research fellowship. 

\section{Preliminaries on moduli Spaces of abelian varieties} 

\subsection{Moduli spaces of abelian varieties}

Throughout the rest of the paper we consider a fixed prime $p$, and we choose an auxiliary odd prime $r\neq p$. %This implies the following convenient lemma 

%\begin{lemma}\label{lem:noquasiunipotence}
%Let $M\in\GL_{2g}(\Z)$ be such that $M$ is quasi-unipotent and $M\cong 1\mod r$. Then $M$ is unipotent.
%\end{lemma}

%\begin{proof}

%Since $M$ is quasi-unipotent its eigenvalues are roots of unity. However, The characteristic polynomial $P_M(T)$ is congruent to $(x-1)^{2g}$ modulo $r$. The lemma follows since the roots of unity of order prime to $r$ inject modulo a prime ideal over $r$. 

%\end{proof}

We let $\Agr$ denote the (fine) moduli space of principally polarized abelian schemes with full symplectic level $r$ structure over $\Z[\zeta_r, \frac1r ]$ (see \cite[Chapter IV, Definition 6.1 (i)]{chaifaltings} for the definition of full symplectic level $r$ structure). We note that the geometric fibers of $\Agr$ over $\Spec \Z[\zeta_r, \frac1r]$ are irreducible (see \cite[Chapter IV Corollary 6.8]{chaifaltings}). For any $\Z[\zeta_r, \frac1r]$-scheme $S$ and a principally polarized abelian scheme $A/S$ having a full symplectic level $r$ structure, we let $[A]$ denote the $S$-valued point of $\Agr$ that induces the principally polarized abelian scheme $A$ along with its level structure\footnote{In particular, the notation suppresses the level structure for brevity.}. 

There exists a smooth toroidal compactification of $\Agr$ over $\mathbb{Z}[\zeta_r,\frac1r]$ that we denote by $\Agrtor$, with the boundary $Z$ being a normal-crossings divisor (see \cite[Chapter IV, Theorem 6.7]{chaifaltings}, or \cite[Theorem 2]{keerthitoroidalcompact}). Now for each integer $n$ we have an $n$-torsion local system $\cL_n$ over $\Agr/\Z[\zeta_r,\frac1{nr}]$. The assumptions above yield the following result: 

\begin{theorem}[Grothendieck, Raynaud]\label{thm:tamelyramified}
Suppose that $n$ is an integer such that $ p\nmid n$, and let $k$ be a field having characteristic $p$. Then the local system $\cL_n/\Agr_k$ is tamely ramifed at the boundary $Z$. 
\end{theorem}
\begin{proof}
This result follows almost directly from results of Grothendieck and Raynaud. Let $K$ denote the function field of $\Agr_k$, let $v$ be a discrete valuation of $K$ arising from an irreducible component of $Z$, and suppose that $K_v$ is the $v$-adic completion of $K$. Let $I \subset \Gal(\overline{K_v}/K_v)$ denote the inertial subgroup. It suffices to prove that the image of $I$ in $\GL_{2g}(\Z/n\Z)$ doesn't contain any element having order $p$. Indeed, this is trivially true for $n = r$. We then have that the universal abelian variety over $K_v$ has semistable reduction at $v$ (see \cite[Theorem 3.5]{Alice}, and \cite[Proposition 4.7]{Grothendieck}). Grothendieck's semistable reduction theorem then implies that for any $n$ not a multiple of $p$, the action of $I$ on $A[n]$ is unipotent, and therefore the image of $I$ in $ \GL_{2g}(\Z/n\Z)$ is prime-to-$p$. The result follows. 
\end{proof}

Continuing to work over a field $k$ of characteristic $p$, we let $\Agr_k^{\ord}$ denote the ordinary locus. Define $\cL_{p^m}$ over $\Agr_k^{\ord}$ to be the dual of the connected component of the $p^m$ torsion of the universal abelian scheme restricted to $\Agr_k^{\ord}$. It is a $p^m$-torsion \'etale local system on $\Agr_k^{\ord}$. 

\subsection{Hecke Operators}
We will continue to work over a field $k$ of characteristic $p$.
\subsubsection{Prime-to-$p$ Hecke operators}

Consider a prime number $\ell \nmid  pr$, and let $\gamma \in \GSp_{2g}(\Q_{\ell}) \cap M_{2g}(\Z_{\ell})$. Consider the double coset $\GSp_{2g}(\Z_{\ell})\gamma \GSp_{2g}(\Z_{\ell})$, and let $\gamma_1,\gamma_2, \hdots \gamma_m \in \GSp_{2g}(\Q_{\ell}) \cap M_{2g}(\Z_{\ell})$ denote a set of right-coset representatives, i.e. $\GSp_{2g}(\Z_\ell)\gamma \GSp_{2g}(\Z_\ell)= \bigsqcup_{i} \gamma_i \GSp_{2g}(\Z_\ell)$. Consider the kernel of $\gamma_i \mod \ell^{m}$. There is a canonical inclusion\footnote{This is induced by the canonical inclusion $\Z/\ell^m \rightarrow \Z/\ell^{m+1}$} $\ker \gamma_i \mod \ell^{m} \rightarrow \ker \gamma_i \mod \ell^{m+1}$. There exists a large enough integer $n_i$ such that this inclusion is an isomorphism for every $m\geq n_i$, i.e. this kernel eventually stabilizes. Pick an integer $n$ that works for all $i$. 

For any $k$-scheme $S$, and an $S$-point of $\Agr$, we obtain an abelian scheme $A/S$. Identify the $\ell$-adic Tate module of $A$ with $\Z_{\ell}^{2g}$ as symplectic spaces. Then, each $\gamma_i$ defines an \'etale subgroup $G_i\subset A[\ell^n]$, $A_i = A/G_i$ is a principally polarized abelian scheme defined over an \'etale cover $S_i$ of $S$, and the quotient map $A\rightarrow A_i$ respects the principal polarizations upto scaling (see \ref{subsubsec: polarized isogenies} below). Further, the level $r$-structure on $A$ uniquely determines a level-$r$ structure on each $A_i$. We define the Hecke operator $\tau_{\gamma}$ to be the correspondence from $\Agr$ to itself, defined by the formula $\tau_{\gamma}[A] = \bigsqcup_{i} [A_i]$. 

We may similarly define $r$-power Hecke operators by going through the same process, except we replace $\GSp_{2g}(\Z_{r})$ with the principal congruence subgroup $\Gamma'_g(\Z_r)\subset \GSp_{2g}(\Z_r)$ consisting of those elements of $\GSp_{2g}(\Z_r) $ that are congruent to 1 mod $r$.

\subsubsection{$p$-power Hecke operators}
The characteristic $p$ theory of $p$-power Hecke operators is complicated. We will follow \cite[Chapter VII Section 4]{chaifaltings}, which defines $p$-power Hecke operators on the ordinary locus $\Agr_k^{\ord}$ of $\Agr_{k}$. We describe these Hecke operators in more detail in \S 4.2, after recalling background on ordinary $p$-divisible groups and Serre-Tate coordinates. 

\subsubsection{Polarized isogenies}\label{subsubsec: polarized isogenies}
Let $A/S$ and $B/S$ denote two principally polarized ordinary abelian schemes with full level-$r$ structure, where $S$ is a $k$-scheme. Let $\lambda_A,\lambda_B$ denote the principal polarizations on $A$ and $B$. A polarized isogeny $\phi: A\rightarrow B$ is an isogeny $\phi$ such that $\phi^*(\lambda_B) = n \lambda_A$, where $n$ is some positive integer. We note that $A$ is isogenous to $B$ via a polarized isogeny respecting the level-$r$ structure precisely when $[B]\in \tau([A])$, where $\tau$ is a product of $\ell$-power and $p$-power Hecke operators, for finitely many primes $\ell\neq p$. Specialize to the case when $S = \Spec \ol{\F_p(t)}$ and suppose further that $A,B$ have endomorphism ring $\Z$. Then, it is easy to see that any isogeny from $A$ to $B$ must necessarily be a polarized isogeny. 
%\subsection{Compactifications}

\subsubsection{Monodromy of these local systems}
We remark that all these local systems have ``maximal monodromy''. Namely, that the natural map $\pi_1(\Agr_{\ol{k}}^{\ord},x) \rightarrow \Gamma_g(\Z_r) \times \prod_{\ell \neq p,r} \Sp_{2g}(\Z_{\ell}) \times \GL_g(\Z_p)$ induced by these local systems is surjective, where $\Gamma_g(\Z_r) = \Gamma'_g(\Z_r)\cap \Sp_{2g}(\Z_r)$ (see \cite{chaifaltings}). 

\section{Constructing families of abelian varieties with specified local conditions}

Throughout this section we work purely in characteristic $p$, so that when we write $\Agr$ we mean $\Agr_{\F_p[\zeta_r]}$.

In what follows, when referring to the \emph{geometric fundamental group} of a geometrically irreducible curve $C$ over a field $k$, we mean $\pi_1(\eta)$ where $\eta$ is the generic point of $C_{\ol k}.$ Throughout, our curve will map to $\Agr$. The local systems on $\Agr$ naturally induce homomorphisms from the geometric fundamental group of $C$ to $\Sp_{2g}(\Z_{\ell})$ and $\GL_g(\Z_p)$ (and their quotients).

% We fix a prime\footnote{The purpose of $r$-torsion is simply to rigidify the moduli problem.} $r>6p$ which is distinct from the characteristic $p$. We fix a smooth compactification of  $\ol\cA_g[r]$, with boundary divisor $D$.
% Note that over $\cA_g[r]$ we have the $n$-torsion local systems $\cL_n$ for 
% $n$ prime to $p$, as well as the $p^m$-torsion local systems $\cL_{p^m}$ over the ordinary locus $\cA_g^{\ord}[r]$, defined as the dual of the connected component of the $p^m$-torsion of the universal Abelian scheme. 

We shall need the following result\footnote{We work with the adjoint groups at almost all $\ell$ because each $\Sp_{2g}(\Z_\ell)$ has a $\Z/2\Z$ factor, whereas we want to have distinct Jordan-Holder factors.}:

\begin{theorem}\label{congmain}
Fix $g>1$. Let $x_1,\dots,x_n$ be elements of $\Agr^{\ord}(\Fpbar[[t]])$ whose differentials are injective, so that their images in $\Agr^{\ord}\bigg(\Fpbar[t]/(t^2)\bigg)$ are not contained in $\Agr^{\ord}(\Fpbar)$. Fix also positive integers $m_1,\dots,m_n$. There exists a curve $C\subset\Agr^{\ord}_{\Fpbar}$ such that
\begin{enumerate}
    \item  $C$ admits $\Spec\Fpbar[[t]]$ points specializing to all of the $x_i \bmod t^{m_i}$, and 
    \item The geometric fundamental group of $C$ surjects onto $\Gamma_g(\Z_r)\times\prod_{\ell\neq p,r}\bbP(\Sp_{2g}(\Z_\ell))\times\GL_g(\Z_p)$
\end{enumerate}

where $$\bbP(\Sp_{2g}(\Z_\ell)):=\Sp_{2g}(\Z_\ell)/\{ \pm \Id\}$$
\end{theorem}

In the rest of this section we prove Theorem \ref{congmain}. The idea of the proof is twofold: We construct $C$ by repeated intersections with smooth hypersurfaces. This allows us to meet condition (1) by keeping track of local conditions. As for condition (2), we show that it is generic: For the prime-to-$p$ part we use tame Lefschetz theorems (see \cite[Theorem 7.3]{Esnault} and \cite{EKin}), and we prove the geometric fundamental group of $C$ surjects onto the $p$-part by having $C$ contain suitable $\F_q$ points whose Frobenius-images generate the $p$-part.

Let $\F_q$ be a finite field over which $\bigcup_i (x_i\bmod{t^{m_i}})$ is defined. By increasing $q$ if necessary, we pick finitely many points $Q_i \in \cA_g[r](\F_q)$ whose Frobenius-images contain every conjugacy class in $\Gl_g(\Z/p^2\Z)$, and we set $R = \cup_i Q_i$. 

\begin{lemma}\label{lem: fullgeommon}
Let $E\subset\Agr^{\ord}$ be an irreducible curve defined over $\F_q$ whose smooth locus contains $R$. Then the geometric fundamental group of $E$ surjects onto $\Gl_g(\Z/p^2\Z)$.
\end{lemma}
\begin{proof}
For ease of notation, we let $f: \pi_1(E,\ol{Q}) \rightarrow \Gl_g(\Z/p^2\Z)$ denote our map. By removing points we may assume $E$ is smooth. Pick a point $Q\in R$ such that the Frobenius at $Q$ maps trivially to $\Gl_g(\Z/p^2\Z)$ under $f$. We have the sequence $$1\ra\pi_1(E_{\ol{\F}_p},\ol{Q})\ra\pi_1(E,\ol{Q})\ra \pi_1(\F_q)\ra 1$$ where the map $\pi_1(E,\ol{Q})\ra \pi_1(\F_q)$ admits a splitting $i_Q$ via the point $Q$.

Now since $R\subset E$ it follows that $f(\pi_1(E,\ol{Q}))$ intersects every conjugacy class of $\Gl_g(\Z/p^2\Z)$, and therefore $f$ is surjective. It then follows that the image of $\pi_1(E_{\Fpbar},\ol Q)$ is a normal subgroup $H$ of $\Gl_g(\Z/p^2\Z)$, and that $H$ together with $f(i_Q(\pi_1(\F_q)))$ generates $\Gl_g(\Z/p^2\Z)$. However, by the choice of $Q$, $f(i_Q(\pi_1(\F_q)))$ is trivial, and so $H$ must be all of $\Gl_g(\Z/p^2\Z)$.
Since $E$ is smooth (and in particular normal) the geometric fundamental group surjects onto $\pi_1(E_{\Fpbar}, \bar{Q})$, and therefore onto $H$ which 
completes the proof.

% Let $\pi:\cA_g[p^2r]^{\ord}\ra \Agr^{\ord}$ be the finite \'etale cover with Galois group $\Gl_g(\Z/p^2\Z)$. Let $E'\subset\pi^*E$ be an irreducible component. 

\end{proof}

% It follows that every curve defined over $k$ containing $R$ as a smooth divisor has fundamental group surjecting onto $\Gl_g(\Z/p^2\Z)$ and hence onto all of $\Gl_g(\Z_p)$. We moreover claim that such a curve $C$ has \emph{geometric} fundamental group surjecting onto $\Gl_g(\Z/p^2\Z)$. Indeed, pick a point $Q\in R\subset C$ such that the Frobenius at $Q$ maps trivially to $\Gl_g(\Z/p^2\Z)$. Then from the sequence $$1\ra\pi_1(C_{\ol{k}},\ol{Q})\ra\pi_1(C_k,\ol{Q})\ra \pi_1(k)\ra 1$$ the claim follows.

\begin{lemma}\label{lem: samequot}
There are finite quotient groups $H$ and $H'$ of $\Gamma_g(\Z_r)\times\prod_{\ell\neq p,r}\bbP(\Sp_{2g}(\Z_\ell))$ and $\GL_g(\Z_p)$ respectively, such that if the same group $N$ occurs as a quotient of both $\Gamma_g(\Z_r)\times\prod_{\ell\neq p,r}\bbP(\Sp_{2g}(\Z_\ell))$ and $\GL_g(\Z_p)$, then the quotient maps must factor through $H$ and $H'$. 
\end{lemma}

\begin{proof}
Note that all but finitely many $\bbP(\Sp_{2g}(\Z/\ell\Z))$ are simple Chevalley groups. Moreover, the only Jordan--Holder factors of $\bbP(\Sp_{2g}(\Z_\ell))$ are
$\bbP(\Sp_{2g}(\Z/\ell\Z))$ and $\Z/\ell\Z$. 

Therefore any quotient $N$ of $\Gamma_g(\Z_r)\times\prod_{\ell\neq p,r}\bbP(\Sp_{2g}(\Z_\ell))$ which does not annihilate $\bbP(\Sp_{2g}(\Z_\ell))$ must admit either $\bbP(\Sp_{2g}(\Z/\ell \Z))$ or $\Z/\ell\Z$ as a quotient, and thus $\GL_g(\Z_p)$ must also admit one of them as a quotient. However, there are only finitely many of these groups that $\GL_g(\Z_p)$ admits as a quotient, and so all but finitely many\footnote{We emphasize the point that the finite set of primes $\ell$ does not depend on $N$.} of the $\bbP(\Sp_{2g}(\Z_\ell))$ must be in the kernel of a surjective map to any such $N$.

Now for any $\ell\neq p,$ and any $\alpha > 1$, note that $\bbP(\Sp_{2g}(\Z_\ell))$ has a normal-cofinite pro-$\ell$ subgroup $X_{\ell,\alpha}$, all of whose elements are $\ell^\alpha$-powers in $\bbP(\Sp_{2g}(\Z_\ell))$. We choose $\alpha$ so that $\ell^{\alpha}> |\GL_g(\F_p)|$. If the kernel of the quotient map to $N$ doesn't contain $X_{\ell,\alpha}$, it follows that $N$ must contain an element whose order is a multiple of $\ell^{\alpha}$, which is impossible as $N$ is also a quotient of $\GL_g(\Z_p)$. The analogous fact also holds for $\Gamma_g(\Z_r)$, which yields the existence of $H$. The existence of $H'$ follows in the same way by noticing that $\Gl_g(\Z_p)$ admits a pro-$p$ normal subgroup $X_{p,a}$, all of whose elements are $p^a$-powers in $\GL_g(\Z_p)$. We now choose $a$ such that $p^a > |H|$ and we may take $H'$ to be the quotient $\GL_g(\Z_p)/X_{p,a} $.

\end{proof}

% Next, as we will be applying Goursat's lemma to join the $p$-part and prime-to-$p$ part of the Monodromy, we shall need to consider quotients of $\Gamma_g(\Z_r)\times\prod_{\ell\neq p,r}\bbP(\Sp_{2g}(\Z_\ell))$ which can occur as quotients of $\GL_g(\Z_p)$. Since the Chevalley groups are simple (with finitely many possible exceptions) most factors don't contribute, and so it is easy to see that all such quotients factor through a single finite quotient group $H$ of $\Gamma_g(\Z_r)\times\prod_{\ell\neq p,r}\bbP(\Sp_{2g}(\Z_\ell))$ and $H'$ of $\GL_g(\Z_p)$. 

By possibly increasing $\F_q$ we increase $R$ to also contain (finitely many extra) points in $\Agr^{\ord}(\F_q)$ whose Frobenius-images contain every conjugacy class in $H\times H'$. By the same proof as Lemma \ref{lem: fullgeommon}, every curve defined over $\F_q$ whose smooth locus contains $R$ as a divisor has geometric fundamental group that surjects onto $H\times H'$. We moreover insist that $R$ is distinct from the closed points in the images of the $x_i$. We are now ready to complete the proof of the main result of this section. 

\begin{proof}[Proof of Theorem \ref{congmain}]

First, consider a series of blowups at smooth points of $\Agrtor$ to obtain a smooth variety $B$ in which all the $x_i$ separate. Next we pick an ample class $L$ of $B$. We shall construct curves in $B$ defined over $k$ by repeatedly intersecting divisors of sections of $L^m$ for large $m$, in a way that is generic in the sense of \cite{poonen}. By \cite[Theorem 1.3]{poonen} and \cite[Theorem 1.1]{charlespoonen}, a density 1 of such curves will be smooth and intersect the boundary divisor smoothly.  Thus by Esnault-Kindler \cite[Theorem 1.1a]{EKin}
together with Theorem \ref{thm:tamelyramified}, the geometric fundamental group of these curves will surject onto $\Gamma_g(\Z_r)\times\prod_{\ell\neq p,r}\bbP(\Sp_{2g}(\Z_\ell)).$ Pick any such curve which also contains $R$ - which is one of a positive density of such curves. By Lemma \ref{lem: fullgeommon}, we have that the geometric fundamental group surjects onto $\GL_g(\Z/p^2)$. The surjectivity onto $\GL_g(\Z/p^2)$ implies surjectivity onto $\GL_g(\Z_p)$. We therefore have that the geometric fundamental group surjects onto $\GL_{g}(\Z_p)$ and $H\times H'$. 

Goursat's Lemma now states that there are isomorphic profinite quotients $N,N'$ of $\Gamma_g(\Z_r)\times\prod_{\ell\neq p,r}\bbP(\Sp_{2g}(\Z_\ell))$ and $\Gl_g(\Z_p)$ such that the image of geometric fundamental group is the pullback of the graph of an isomorphism between them. Let $f: N\rightarrow N'$ denote this isomorphism. By  Lemma \ref{lem: samequot}, the quotient maps factor through $H\xrightarrow{\pi} N$ and $H'\xrightarrow{\pi'} N'$, and so the image of the geometric fundamental group in $H\times H'$ consists of elements $\{(h,h')\}$ satsifying $f(\pi(h)) = \pi'(h')$. However, we have already arranged for the geometric fundamental group to surject onto $H\times H'$. Therefore, we have that $N$ and $N'$ are trivial. Therefore, the geometric fundamental group surjects onto $\Gamma_g(\Z_r)\times\prod_{\ell\neq p,r}\bbP(\Sp_{2g}(\Z_\ell))\times \Gl_{g}(\Z_p)$ as desired.

\end{proof}
\medskip

% Finally, Goursat's lemma together with the fact that the geometric monodromy of $C_0$ surjects onto $H\times H'$ implies that
% the fundamental group of $C_0$
% surjects onto all of $\Gamma_g(\Z_r)\prod_{\ell\neq p,r}\bbP(\Sp_{2g}(\Z_\ell))\times\Gl_g(\Z_p)$ as desired.

\section{Hecke operators in characteristic $p$}

Throughout this section, we work purely in characteristic $p$ and will again write $\Agr$ when we mean $\Agr_{\F_p[\zeta_r]}$. We will let $\Agr^{\ord}$ denote the ordinary locus of $\Agr$. 

\subsection{Background on Serre-Tate coordinates}
We briefly recall Serre-Tate coordinates and describe the local action of $p$-power Hecke operators in terms of these coordinates. For a thorough treatment of Serre-Tate coordinates, see \cite{katzserretate} or \cite{chaioort}.

Let $\pdiv$ denote an ordinary $p$-divisible group over an algebraically closed field $k$ with dimension $g$ and height $2g$. We have that $\pdiv \simeq \pdiv^{\mult} \times \pdiv^{\et}$, where $\pdiv^{\mult} \simeq \mu_{p^{\infty}}^g$ and $\pdiv^{\et} \simeq (\Q_p/\Z_p)^g$. The $p$-divisible groups $\pdiv^{\mult}$ and $\pdiv^{\et}$ are rigid, and the functoriality of the connected-\'etale 
exact sequence implies that deformations of $\pdiv$ to an Artin local $k$-algebra $R$ are in bijection with extensions of $\pdiv^{\et}\times \Spec R$ by $\pdiv^{\mult}\times \Spec R$. In particular, the deformation space of $\pdiv$ has a natural group structure. 

Let $T_p(\pdiv)$ denote the Tate-module of $\pdiv^{\et}$, and let $T_p(\pdiv^{\vee})$ denote the Tate-module of $(\pdiv^{\mult})^\vee$, where $^\vee$ denotes taking the Cartier dual. Then, the formal deformation space of $\pdiv$, which we have already seen has the structure of a group, actually has the structure of a formal torus, and is canonically isomorphic to $\Hom(T_p(\pdiv)\otimes T_p(\pdiv^{\vee}),\Gmhat)$. It will later also be convenient to use the canonical identification of this torus with $(T_p(\pdiv)^* \otimes T_p(\pdiv^\vee)^*) \otimes \Gmhat$. Here, $^*$ denotes taking the linear dual of a $\Z_p$-module.

Specifically, let $R$ denote any Artin local $k$-algebra, with maximal ideal $\fm$. Then, the deformations of $\pdiv$ to $R$ are in bijection with bilinear maps $q: T_p(\pdiv) \times T_p(\pdiv^{\vee}) \rightarrow 1+\fm$. Therefore, a $\Z_p$-basis $e_i = \{e_1\hdots e_g\}$ of $T_p(\pdiv)$ and $m_j = \{m_1\hdots m_g\}$ of $T_p(\pdiv^{\vee})$ yield coordinates $Q=\{q_{ij}\}$ on the deformation space, i.e. the characteristic $p$ deformation space is isomorphic to $\Spf k[[q_{ij}-1]] $, and an $R$-valued point of the deformation space corresponds to a choice of $g^2$ elements in $1 + \fm$, which is the same data as a bilinear map $q: T_p(\pdiv) \times T_p(\pdiv^{\vee}) \rightarrow 1+\fm$. The same analysis holds when $R$ is a complete local $k$-algebra. We will think of $Q$ as an element of $M_g((1+\fm))$. We note that the natural $\Z_p$-module structure on $1+\fm$ canonically equips $M_g((1+\fm))$ with the structure of an $M_g(\Z_p)$ bi-module. We now specialize to the case of $R = k[[t]]$. Given $Q =\{q_{ij}(t) \} \in M_g((1+\fm))$, we define $\frac{1}{p^n}\cdot Q$ as $\{ q_{ij}(t^{\frac{1}{p^n}})\} \in M_g((1+ \fm_{n}))$, where $\fm_n \subset k[[t^{\frac{1}{p^n}}]]$ is the maximal ideal. This allows us to ``act'' on the deformation space by $\GL_g(\Q_p)$ (on both sides), except that the output will valued in $k[[t^{\frac{1}{p^{\infty}}}]]$.

Let $\pdiv'$ denote another $p$-divisible group and let $\phi: \pdiv\rightarrow \pdiv'$ be an isogeny. We pick bases $\{e'_i,m'_j\}$ for $T_p(\pdiv'),T_p(\pdiv^{\prime \vee})$, and let $X$ and $Y$ denote the matrices (necessarily with non-zero determinant) of $\phi: T_p(\pdiv)\rightarrow T_p(\pdiv')$ and $\phi^{\vee}: T_p(\pdiv^{\prime \vee})\rightarrow T_p(\pdiv^\vee)$ in these coordinates. Let $\lift/k[[t]]$ deform $\pdiv$, and let $Q = \{q_{ij}(t)\}$ denote its Serre-Tate coordinates. The following result is well known (see for example \cite[Theorem 2.1]{katzserretate} or \cite[Proposition 2.23]{chaioort}).
\begin{proposition}\label{prop:coordinatesafterisog}
Notation as above, and consider the matrix $Q' = {X^T}^{-1}QY$. Note that a priori the entries of $Q'$ are valued only in $k((t^{\frac{1}{p^{\infty}}}))$.
\begin{enumerate}
    \item Suppose that the entries of $Q'$ are valued in $1+tk[[t]]$. Then, $\phi$ lifts uniquely to an isogeny $\lift \rightarrow \lift'$ over $k[[t]]$ (where $\lift'$ is necessarily a deformation of $\pdiv'$) and the Serre-Tate coordinates (with respect to $\{e'_i,m'_j \}$) of $\lift'$ are $Q'$.
    
    \item If the entries of $Q'$ aren't valued in $1 + tk[[t]]$ (eg. if some entry equals $(1+t)^{\frac{1}{p}}$), then $\phi$ doesn't lift to an isogeny over $k[[t]]$ with source $\lift$. However,  let $n=n(\lift,\phi)$ denote the smallest positive integer such that the entries\footnote{For example, if $Q'$ were the $1\times 1$ matrix with entry $(1+t)^{\frac{1}{p}}$, then $n=1$ and $Q'(t^p)$ has entry $(1+t^p)^{\frac{1}{p}} = (1+t)$.} of $Q'(t^{p^n})$ are valued in $1+tk[[t]]$. Then, $\phi$ lifts uniquely to an isogeny $\lift \times_{\Spf k[[t]]} \Spf k[[s]] \rightarrow \lift'$, where the map $\Spf k[[s]] \rightarrow \Spf k[[t]]$ is given by $t\mapsto s^{p^n}$, and $\lift'$ is a deformation of $\pdiv'$ to $k[[s]])$. The Serre-Tate coordinates of $\lift \times_{\Spf k[[t]]} \Spf k[[s]]$ are $Q(t) = Q(s^{p^n})$, and of $\lift'$ are $Q'(s) = {X^T}^{-1}Q(s^{p^n})Y$. 
\end{enumerate}

 In either case, we will let $\liftphi$ denote the isogeny that lifts $\phi$, which is already defined over $\Spf k[[t]]$ in the first case, but which is only defined over $\Spf k[[s]] = \Spf k[[t^{\frac{1}{p^n}}]] $ in the second case. 
\end{proposition}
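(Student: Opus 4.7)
The proof should follow directly from the functoriality of the Serre-Tate correspondence. The key input is Katz's compatibility formula: whenever an isogeny $\phi: \pdiv \to \pdiv'$ lifts to $\liftphi: \lift \to \lift'$, the associated bilinear forms $q_\lift, q_{\lift'}$ satisfy
$$q_{\lift'}(\phi(e), m') = q_\lift(e, \phi^\vee(m'))$$
for all $e \in T_p(\pdiv)$ and $m' \in T_p(\pdiv'^\vee)$. In the chosen bases, this translates precisely to the matrix identity $Q' = X^{-1}QY$ in the $M_g(\Z_p)$-bimodule $M_g((1+\mathfrak{m})^\times)$. Conversely, Serre-Tate theory asserts that any bilinear form $T_p(\pdiv') \times T_p(\pdiv'^\vee) \to 1 + \mathfrak{m}$ arises uniquely from a deformation of $\pdiv'$, and the displayed compatibility is exactly the criterion for $\phi$ to extend to that deformation. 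Thus, whenever a lift exists, both $\lift'$ and $\liftphi$ are uniquely determined by the equation $Q' = X^{-1}QY$.

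For Part (1), suppose $Q' = X^{-1}QY$ has entries in $1 + tk[[t]]$. Extending the Serre-Tate correspondence from Artin local $k$-algebras to $k[[t]]$ by passage to the inverse limit over the quotients $k[[t]]/(t^N)$, the bilinear form $Q'$ corresponds to a deformation $\lift'$ of $\pdiv'$ over $k[[t]]$, and Katz's compatibility (built into the definition of $Q'$) forces $\phi$ to lift to $\liftphi: \lift \to \lift'$. Uniqueness follows from the first paragraph.

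For Part (2), the obstruction to $X^{-1}QY$ being integral arises from $X^{-1}$ having entries of negative $p$-adic valuation: writing $X^{-1} = p^{-N}\tilde X$ with $\tilde X \in M_g(\Z_p)$, evaluating the matrix product requires extracting $p^N$-th roots in $(1 + tk[[t]])^\times$. In characteristic $p$ with $k$ algebraically closed, for $f(t) = \sum a_i t^i \in tk[[t]]$ the equation $(1+g)^p = 1 + f$ is solved by $g = \sum a_i^{1/p} t^{i/p}$, so $(1 + f)^{1/p} \in 1 + t^{1/p}k[[t^{1/p}]]$. Iterating, the entries of $Q'$ lie in $1 + t^{1/p^\infty}k[[t^{1/p^\infty}]]$, and $n = n(\lift, \phi)$ is well-defined as the minimal $n$ making $Q'(t^{p^n})$ land in $1 + tk[[t]]$. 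Base-changing $\lift$ along $t \mapsto s^{p^n}$ produces a deformation over $k[[s]]$ with Serre-Tate matrix $Q(s^{p^n})$, and by construction $X^{-1}Q(s^{p^n})Y = Q'(s) \in 1 + sk[[s]]$, so applying Part (1) to this base-changed deformation yields $\lift'$ over $k[[s]]$ and the desired isogeny $\liftphi$.

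The main obstacle is purely bookkeeping: fixing conventions so that the matrix form of Katz's formula is exactly $Q' = X^{-1}QY$, keeping track of how $p$-adic denominators in $X^{-1}$ convert into fractional $t$-powers via the $\Z_p$-module structure on $(1 + tk[[t]])^\times$, and verifying that the Serre-Tate correspondence behaves well under the inverse limit from Artin quotients to $k[[t]]$ and under the base change $t \mapsto s^{p^n}$.
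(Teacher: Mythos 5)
The paper does not actually prove this proposition; it records it as ``well known'' and cites \cite[Theorem 2.1]{katzserretate} and \cite[Proposition 2.23]{chaioort}. Your proposal is a correct reconstruction of precisely that standard argument: the functorial compatibility $q_{\lift'}(\phi_*(\cdot),\cdot) = q_{\lift}(\cdot,\phi^\vee_*(\cdot))$ from Katz as the key input, its matrix form $Q' = X^{-1}QY$ over the $\Z_p$-bimodule $M_g((1+\fm)^\times)$, passage to the limit over Artin quotients of $k[[t]]$ for part (1), and the char-$p$ observation that $(1+g)^p = 1+g^p$ controls how $p$-adic denominators in $X^{-1}$ translate into $t^{1/p^n}$-powers for part (2), after which one base-changes along $t\mapsto s^{p^n}$ and applies part (1). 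The one point you correctly flag but do not settle is the transpose convention (the naive computation gives $Q' = (X^{-1})^{\mathrm T}QY$); this is harmless here since in the paper's applications $X$ and $Y$ are diagonal, but a fully general statement would need the convention pinned down.
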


Suppose now that $\pdiv$ is equipped with a principal polarization, $\lambda$. This yields canonical isomorphisms $T_p(\pdiv)\rightarrow T_p(\pdiv^{\vee})$ and $T_p(\pdiv)^*\rightarrow T_p(\pdiv^\vee)^*$, both of which we will also denote by $\lambda$. Suppose that we have chosen coordinates such that $m_i = \lambda(e_i)$. Let $\{\epsilon_i\}$ be a basis of $T_p(\pdiv)^*$ dual to $\{e_i\}$, and let $\mu_i = \lambda(\epsilon_i)$ (note that $\{\mu_i\}$ is a basis of $T_p(\pdiv^{\vee})^*$ dual to $\{m_i\}$). Then, $\lambda$ lifts to a deformation of $\pdiv$ precisely when $q_{ij} = q_{ji}$ (\cite[Corollary 2.2.4]{chaioort}). The deformation space of $(\pdiv,\lambda)$ is $\Hom^{\Sym}(T_p(\pdiv)\otimes T_p(\pdiv^\vee),\Gmhat) \subset \Hom(T_p(\pdiv)\otimes T_p(\pdiv^\vee),\Gmhat)$, the set of all symmetric maps. This space is canonically isomorphic to $\big(T_p(\pdiv)^*\otimes T_p(\pdiv^\vee)^*\big)^{\Sym}  \otimes \Gmhat$, where $(T_p(\pdiv)^*\otimes T_p(\pdiv^\vee)^*)^{\Sym}$ is the $\Z_p$-span of $\{\epsilon_i \otimes \mu_j + \epsilon_j\otimes \mu_i, \epsilon_i\otimes \mu_i \}$. In coordinates, this space equals $\Spf k[[q_{ij}]]/(\{q_{ij} - q_{ji} \})$.

\subsection{Description of $p$-power Hecke operators on the ordinary locus.}
We now briefly describe $p$-power Hecke operators on ${\Agr}^{\ord}$. For further details, the interested reader may consult \cite[Chapter VII Section 4]{chaifaltings}. 

Let $(\pdiv,\lambda),(\pdiv',\lambda')$ denote ordinary principally polarized $p$-divisible groups as above, and let $\phi: \pdiv \rightarrow \pdiv'$ denote an isogeny such that $\phi^*(\lambda') = p^n\lambda$. Suppose that we choose bases for $T_p(\pdiv),T_p(\pdiv^\vee),T_p(\pdiv')$ and $T_p(\pdiv^{\prime,\vee})$ which are compatible with $\lambda,\lambda'$. Let $X$ and $Y$ denote the matrices of the maps $T_p(\pdiv) \rightarrow T_p(\pdiv')$ and $T_p(\pdiv^{\prime,\vee}) \rightarrow T_p(\pdiv^{\vee})$ induced by $\phi$. The condition $\phi^{*}\lambda' = p^n\lambda$ is equivalent to $YX = p^n\Id$. We say that the matrix of $\phi$ is $(X,Y)$. 

To that end, let $(n:n_1,n_2,\hdots n_g)$ denote a sequence of integers such that $n\geq n_1\geq n_2 \hdots \geq n_g\geq 0$. Consider all polarized isogenies $\{\phi: \pdiv \rightarrow \pdiv' \}$ between ordinary objects such that the matrix of $\phi$ is $(X,Y)$ where $YX = p^n\Id$ and 
\[
X \in \GL_g(\Z_p)
\left[
\begin{array}{cccc}
p^{n_1}&&&\\
&p^{n_2}&&\\
&&\ddots&\\
&&&p^{n_g}\\
\end{array}
\right] \GL_g(\Z_p).
\]\label{hecketype}
We say that the \emph{type} of $\phi$ is $(n:n_1,n_2,\hdots n_g)$. Given a principally polarized ordinary abelian variety $A$, consider all polarized $p$-power isogenies $\phi_i: A\rightarrow A_i$ of type $(n:n_1,n_2,\hdots n_g)$. Define the Hecke correspondence $\tau$ of \emph{type} $(n:n_1,n_2, \hdots n_g)$ as $\tau([A]) = \bigsqcup [A_i]$, where the $A_i$ are as above.

Note that this exhausts all polarized $p$-power isogenies as we range over all types, since the union of all such double cosets is $M_g(\Z_p)\cap \GL_g(\Q_p)$.

We note that the Hecke operator $\tau$ is Frobenius precisely when it has type $(1:0,0,\hdots 0)$ and is Verschiebung precisely when it has type $(1:1,1,\hdots 1)$. Further, $\tau$ factors through Frobenius precisely when $n > n_1$, and factors through Verschiebung when $n_g > 0$. The Hecke operator $\tau$ induces a closed subvariety $\Agr^{\ord}[\tau]\subset \Agr^{\ord}\times \Agr^{\ord}$, with the two canonical projections $\Pr_1(\tau),\Pr_2(\tau)$ to $\Ag$. The maps $\Pr_i(\tau)$ are usually inseparable.

Now, let $C \subset \Agr$ denote a reduced generically ordinary curve, and let $C^{\ord}$ be the intersection of $C$ with the ordinary locus of $\Agr$. We define $\tau(C^{\ord}) \subset \Agr^{\ord}$ to be the unique reduced subscheme corresponding to $\Pr_1(\tau)_*\Pr_2(\tau)^*C^{\ord}$, and we define $\tau(C)$ to be the Zariski closure of $\tau(C^{\ord})$ in $\Agr$. We make the following observation that will be used below: Let $\tau$ denote a Hecke operator which doesn't factor through Frobenius, and suppose the Serre-Tate coordinates of some branch of the completion of $C$ at an ordinary point are $Q=\{q_{ij}\}$, where none of the $q_{ij}$ are $p$th powers. Then the Serre-Tate coordinates of $\tau$ applied to this branch are calculated using the recipe outlined in Proposition \ref{prop:coordinatesafterisog} (2), i.e. the Serre-Tate coordinates\footnote{This is with respect to appropriate bases, of course. Canonically, given an isogeny, $X,Y$ are the matrices of the induced maps between the corresponding Tate modules and their duals. It is only after we pick compatible $\Z_p$-bases for the Tate modules that the description in terms of matrices as in \ref{prop:coordinatesafterisog} (2) applies.} of the various branches equal ${X^T}^{-1}Q^{p^n}Y$, where the matrix of the isogeny is $(X,Y)$, $X$ is as in \ref{hecketype} and $YX = p^n \Id$.

\subsection{$p$-power Hecke operators on Serre-Tate coordinates}

We will now describe the action of $p$-power Hecke operators locally in terms of Serre-Tate coordinates. For a point $x\in \Agr$, we will denote the associated principally polarized abelian variety by $A_x$ and its associated $p$-divisible group by either $A_x[p^{\infty}]$ or $\pdiv_x$. We note that the formal neighbourhood $\widehat{\Agr}^x$ of $\Agr$ at $x$ is canonically isomorphic to the polarized deformation space of $A_x$ (which, by the Serre-Tate lifting theorem, is the same as the polarized deformation space of $\pdiv_x$). Therefore, this space canonically has the structure of a formal torus, and in coordinates, is isomorphic to $\Hom^{\Sym}(T_p(\pdiv_x)\otimes T_p(\pdiv_x^\vee),\Gmhat)$.
\begin{defn}\label{defnserretatedirection}
Let $x\in \Agr(\Fpbar)$ denote an ordinary point, and let $\ell \subset T_x\Agr$ denote a one-dimensional subspace. We say that $\ell$ is a \emph{Serre-Tate} direction if it equals the tangent space of a rank-1 formal subtorus of $\widehat{\Agr}^x$, i.e. $L\otimes \Gmhat$ where $L\subset (T_p(\pdiv)^*\otimes T_p(\pdiv^\vee)^*)^{\Sym}$ is a saturated rank-1 $\Z_p$-submodule. We say that $\ell$ is a \emph{primitive Serre-Tate} direction if in addition $L$ is spanned by a primitive tensor in $(T_p(\pdiv)^*\otimes T_p(\pdiv^\vee)^*)^{\Sym}$. We remark that primitive tensors of $(T_p(\pdiv)^*\otimes T_p(\pdiv^\vee)^*)^{\Sym}$ are of the form $\epsilon \otimes \mu$ where $\mu = \lambda(\epsilon)$. 

Analogously, we define $T_x^m\Agr$ to be the $m$th order neighborhood of $x$ in $\Agr$. We define a primitive $m$th order Serre-Tate direction to be the restriction of a rank-1 formal subtorus of $\widehat{\Agr}^x$ to this neighborhood.

\end{defn}
We note that for any positive $m$, there are only finitely many $m$th order Serre-Tate directions.

\subsection{Hypersymmetric points}
We now define the notion of hypersymmetric points. As will be clear, we make crucial use of such points to treat $p$-power Hecke operators. 
\begin{defn}\label{hypersymmetricabvars}
An ordinary abelian variety $A/\F_q$ is said to be  \textbf{hypersymmetric} if $A$ is isogenous to $E^g$, where $E/\F_q$ is an ordinary elliptic curve. We say that $x\in \Agr^{\ord}(\F_q)$ is a hypersymmetric point if $A_x$ is a hypersymmetric abelian variety. 
\end{defn}
\begin{lemma}\label{lemma:hypersymmetric}
Let $x\in \Agr(\F_q)$ be ordinary and hypersymmetric, and let $\tau$ be a $p$-power Hecke operator. Then, $\tau(x) \subset \Agr(\F_q)$. Further, every Serre-Tate direction at $x$ thought of as an $\Fpbar$-point of $\Agr$ is defined over $\F_q$.
\end{lemma}
\begin{proof}
Every subgroup of $A_x[p^\infty]$ has the form $G^{\et}\times G^{\mult}$, where $G^{\et} \subset A_x[p^{\infty}]^{\et}$, and $G^{\mult}\subset A_x[p^{\infty}]^{\mult}$. Since $A_x$ is hypersymmetric, the Galois representation on $T_p(A_x[p^{\infty}]^{\et})$ acts through a scalar (because this statement is trivially true for $E$). It follows that every subgroup of $A_x[p^{\infty}]$ is defined over $\F_q$, whence the first part follows. 

As to the $\F_q$-rationality of Serre-Tate directions at $x$, note that the deformation space of $\pdiv_x/\F_q$ is a formal torus isomorphic to $\Hom(T_p(\pdiv)\otimes T_p(\pdiv^{\vee}),\Gmhat)$ and the polarized deformation space is isomorphic to $\Hom^{\Sym}(T_p(\pdiv) \otimes T_p(\pdiv^{\vee}),\Gmhat)$ (note that the Galois action on $T_p(\pdiv)$ and $T_p(\pdiv^{\vee})$ will not be trivial, and so these formal tori need not split over $\F_q$). As $\Gal(\Fpbar/\F_q)$ acts on $T_p(\pdiv)\otimes T_p(\pdiv^{\vee})$ through scalars, every sub-torus of this formal torus defined over $\Fpbar$ descends to $\F_q$. Therefore, every (primitive) Serre-Tate direction over $\Fpbar$ is defined over $\F_q$. We emphasize that the rank-1 formal subtorus $L\otimes \Gmhat$ in Definition \ref{defnserretatedirection} need not be split over $\F_q$. Nonetheless, $L\otimes \Gmhat$ will be a sub-torus defined over $\F_q$. 
\end{proof}

%The analysis of Serre-Tate coordinates and the Hecke action on these coordinates carried out in the previous subsections was in the setting of ordinary $p$-divisible groups over an algebraically closed fields. Much of this analysis goes through verbatim for ordinary $p$-divisible groups over perfect (and not necessarily algebraically closed) fields. We will now elaborate on this in the context of hypersymmetric abelian varieties over finite fields. To that end, let $x\in \Agr(\F_q)$ be a hypersymmetric point.  
 %The deformation space of $\pdiv_x$ is a formal torus, isomorphic to  As in Lemma \ref{lemma:hypersymmetric} above, . Finally, we remark that the $p$-power Hecke operators defined above descend to $\F_p(\zeta_r)$.

\section{Proofs of the main theorems}
Let $X\subset \Agr$ denote a subvariety. Recall that we say that $X$ is \emph{generically ordinary} if every irreducible component of $X$ has non-empty intersection with $\Agr^{\ord}$. We also say that two points of $\Agr$ are isogenous if the underlying abelian varieties are.

Now, let $C\subset \Agr$ denote a generically ordinary reduced irreducible curve, that has the following properties:
\begin{enumerate}\label{dataC}
    \item $C$ passes through every $\F_q$-rational ordinary hypersymmetric point $x$.\label{dataC1}
    
    \item For any ordinary hypersymmetric point $x \in \Agr(\F_q)$, $C$ is singular at $x$, and for every $m$th order primitive Serre-Tate direction $\ell_m$ at $x$, there exists a formally smooth local branch $C_{\ell_m,x}$ of $C$ at $x$ which restricts to $\ell_m$. \label{dataC2}
    
\end{enumerate}

We have the following key theorem: 
\begin{theorem}\label{thm:ppowerheckesmooth}
Let $\tau$ denote any $p$-power Hecke operator, and let $C$ be as above. Then $\tau(C)$ contains every ordinary $x \in \Agr(\F_q)$ which is hypersymmetric. Further, $\tau(C)$ contains every $m$th order primitive Serre-Tate direction in $T_x^m\Agr$.
\end{theorem}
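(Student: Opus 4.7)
The plan is to verify the two parts of the theorem in turn: first that $x \in \tau(C)$ as a closed point, then that $\tau(C)$ contains every primitive $k$th order Serre-Tate direction $\ell_k$ at $x$. For the first part, the key observation is that any $p$-power Hecke neighbor of an ordinary hypersymmetric $\F_q$-point $x$ is again an ordinary hypersymmetric $\F_q$-point: if $A_x$ is isogenous to $E^g$ with $E/\F_q$ ordinary, then any $A_z$ obtained from $A_x$ by a $p$-power isogeny is itself isogenous to $E^g$ (hence hypersymmetric), and is moreover defined over $\F_q$ because the $p$-power isogeny orbit of an ordinary elliptic curve over $\F_q$ stays within $\F_q$ (the connected-\'etale splitting of $E[p^\infty]$ descends to $\F_q$ and behaves functorially under isogenies). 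Hence by the first property in the data defining $C$, every such Hecke neighbor $z$ lies on $C$, giving $x \in \tau(C)$.

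For the direction claim, I fix $\ell_k$ corresponding to a primitive symmetric tensor $w$ in $(T_p(\pdiv_x)^* \otimes T_p(\pdiv_x^\vee)^*)^{\Sym}$. My aim is to produce a hypersymmetric $\F_q$-Hecke-neighbor $z$, an isogeny $\phi: A_x \to A_z$ of type $\tau$, and a primitive Serre-Tate direction $v$ at $z$, such that the formally smooth branch $C_{v,z}$ guaranteed by the second property of $C$, when transferred to $x$ via the formal lift of Proposition \ref{prop:coordinatesafterisog} (possibly after an inseparable base change $t \mapsto s^{p^N}$), cuts out a formal subscheme of $\widehat{\Ag}^x$ agreeing with $\ell_k$ up to order $k$. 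Working in compatible bases of the Tate modules in which the matrices $X$ and $Y$ of $\phi$ and $\phi^\vee$ are diagonal with entries $p^{n_i}$ and $p^{n-n_i}$, the transfer rule of Proposition \ref{prop:coordinatesafterisog} furnishes an explicit multiplicative formula $q_{x,ij}(s) = (1+s)^{e_{ij}}$ for the image Serre-Tate coordinates, where $e_{ij}$ is a known linear combination (in the exponent) of $v_{ij}$ scaled by appropriate powers of $p$ determined by $N$ and the Hecke type. Two branches define the same $k$th order subscheme precisely when the exponents $e_{ij}$ coincide modulo $p^K$ for $K = \lceil \log_p(k+1) \rceil$, so the task reduces to a $p$-adic matching problem modulo $p^K$.

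The main obstacle will be carrying out this $p$-adic match for every primitive target $w$ and every type $\tau$. The hypersymmetric hypothesis is essential here: since $A_x$ is isogenous to $E^g$ for an ordinary elliptic curve $E$, the endomorphism ring of $A_x[p^\infty]$ acts with enough symmetry (essentially $M_g(\Z_p) \times M_g(\Z_p)$ coming from the connected-\'etale splitting) to provide a $\GL_g(\Z_p)$ worth of freedom in choosing the diagonalizing basis of $\phi$. Varying this basis amounts to conjugating $w$ by an element of $\GL_g(\Z_p)$, which acts transitively on primitive rank-one symmetric tensors modulo $p^K$. Combining this with the freedom to choose $v$ as any primitive symmetric tensor at $z$, one realizes every target $w$ in the image of the diagonal formula modulo $p^K$. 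The second property of $C$ then supplies the corresponding branch $C_{v,z}$, whose Hecke image contains $\ell_k$ by construction, completing the argument.
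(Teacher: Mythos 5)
Your first part (that $\tau(C)$ contains $x$) is correct and matches the paper: a $p$-power Hecke neighbor of an ordinary hypersymmetric $\F_q$-point is again hypersymmetric and defined over $\F_q$ (this is Lemma \ref{lemma:hypersymmetric}), so $C$ passes through every such neighbor and hence $x \in \tau(C)$. The second part, however, has a genuine gap. The paper's argument is \emph{not} a post-hoc matching problem solved by transitivity; it is a direct computation in a carefully chosen basis. One first fixes a polarization-compatible basis at $x$ so that $\ell_k$ is $\epsilon_1\otimes\mu_1$, then takes $y$ to be the Hecke neighbor under the \emph{diagonal} quotient in that basis, takes $v = \epsilon_1'\otimes\mu_1'$ in the pushed-forward basis at $y$, and transfers $C_{y,\ell'_k}$ back via $\phi$. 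The transfer formula $Q(s)=X^{-1}Q'(s^{p^n})Y$ then gives, entrywise, $Q(s)_{ij}=q'_{ij}(s^{p^n})^{p^{\,n-n_i-n_j}}$. The crucial observation, which your proposal does not make, is that after reparameterizing by $u=s^{p^{2n-2n_1}}$ one gets $Q(u)_{11}\equiv 1+u$ and $Q(u)_{ij}\equiv 1 \pmod{u^{k+1}}$ for $(i,j)\neq(1,1)$, precisely because $n_1\ge n_i$ for all $i$ so that $2n_1\ge n_i+n_j$; this monotonicity of the Hecke type exponents is what makes the $(1,1)$-direction at $y$ transfer exactly to the $(1,1)$-direction at $x$. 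Without it, there is no reason the ``matching problem modulo $p^K$'' should be solvable, and the ``$\GL_g(\Z_p)$ acts transitively'' reasoning does not substitute for it: the transitivity is only used, implicitly, to reduce to the normalized case $\ell_k=\epsilon_1\otimes\mu_1$, after which the isogeny and its diagonalizing basis are no longer freely variable, so one cannot independently conjugate $w$.

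A secondary point: you write that the hypersymmetric hypothesis is ``essential'' to obtain the $M_g(\Z_p)\times M_g(\Z_p)$-action and the resulting $\GL_g(\Z_p)$ freedom in choosing bases. That is not what hypersymmetry is doing here. The endomorphism ring of the $p$-divisible group of \emph{any} ordinary abelian variety is $M_g(\Z_p)\times M_g(\Z_p)$ via the connected-\'etale splitting, and the freedom to choose polarization-compatible bases is likewise generic. Hypersymmetry enters exactly once, in Lemma \ref{lemma:hypersymmetric}, to guarantee that the $p$-power Hecke orbit stays inside $\Ag(\F_q)$ so that property (1) of $C$ applies at the neighbor $y$. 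You stated this correctly in your first paragraph, so the re-invocation in the second paragraph is a misattribution rather than a new use.
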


\begin{proof}
%The Hecke correspondences corresponding to Frobenius and Verschiebung both preserve properties \ref{dataC1} and \ref{dataC2}, so we assume that $\tau$ doesn't factor through either Frobenius or Verschiebung. Therefore, we may assume that $\tau$ has type $(n:n_1,n_2,\hdots n_g)$ with $n=n_1, \ n_g = 0$. Then, the dual Hecke correspondence $\tau^\vee$ has type $(n:n'_1,n'_2, \hdots n'_g)$ with $n'_i + n_{g-i} = n$. 
Let $x\in \Agr(\F_q)$ denote some fixed hypersymmetric point, and let $\ell_m$ denote an $m$th order primitive Serre-Tate direction, and let $C_{x,\ell_m} = \Spf \F_q[[t]]$ denote the formally smooth local branch of $C$ at $x$ that realizes $\ell_m$. Consider an ordinary hypersymmetric point $y\in \Agr(\F_q)$ and some $m$th order primitive Serre-Tate direction $\ell'_m$ at $y$. By assumption, $C$ passes through $y$ and there exists a formally smooth local branch $C_{\ell'_m,y}$ of $C$ at $y$ which restricts to $\ell'_m$. Therefore, it suffices to find $(y,\ell'_m)$ such that $\tau(y,\ell'_m) = (x,\ell_m)$. 

Let $\tau^{\vee}$ be the Hecke correspondence dual to $\tau$. We note that $\tau^{\vee}$ has type\footnote{Recall that the type of a Hecke correspondence (by definition) is a monotonically decreasing sequence.} $(n:n'_g,n'_{g-1}, \hdots n'_1)$, where $n'_i = n - n_i$. We will pick $y$ to be a point in $\tau^{\vee}(x)$. By Lemma \ref{lemma:hypersymmetric}, it follows that $y$ and every $m$th order Serre-Tate direction at $y$ is defined over $\F_q$. Therefore, we will work over $\Fpbar$ and treat $\pdiv_x$ and $\pdiv_y$ as $p$-divisible groups over $\Fpbar$ (this allows us to use the analysis of $p$-power Hecke operators and Serre-Tate coordinates carried out in the previous section). We fix polarization-compatible bases $\{e_i \}$ of $T_p(\pdiv_{x})$ and $\{m_j \}$ of $T_p(\pdiv^{\vee}_{x})$, with dual bases $\{\epsilon_i \} $ and $\{ \mu_j\}$, such that $\ell_m$ is realized by the sub-torus corresponding to the span of $\epsilon_1\otimes \mu_1$. %This implies that the Serre-Tate coordinates of $C_{x,\ell_k}$ are given by functions $q_{ij}(t)$ where $q_{11}(t) \equiv 1+t \mod t^{k+1}$ and $q_{ij}(t) \equiv 1 \mod t^{k+1}$ whenever $(i,j) \neq (1,1)$. 

Let $y\in \tau^{\vee}(x)$, such that the associated isogeny $\phi^{\vee}:A_x \rightarrow A_y$ is obtained by quotienting $A_x$ by the diagonal\footnote{The notion of diagonal is with respect to the ordered bases $\{e_i\}$ and $\{m_j\}$ above.} subgroup with type $(n:n'_1,\hdots n'_g)$. Let $e'_i \in T_p(\pdiv_{y,\Fpbar})$ be defined as $e'_i = \frac{1}{p^{n'_i}}\phi^\vee(e_i)$, and let $m'_j \in T_p(\pdiv_{y,\Fpbar}^{\vee})$ be defined analogously -- $\{e'_i\}$ and $\{ m'_j \}$ are bases of $T_p(A_{y,\Fpbar})$ and $T_p(A_{y,\Fpbar}^{\vee})$ respectively. Let $\{\epsilon'_i \}$ and $\{ \mu'_j\}$ denote dual bases. Consider the branch $C_{y,\ell'_m}$, where $\ell'_m$ is realized by $\epsilon_1' \otimes \mu_1'$. The Serre-Tate coordinates of $C_{y,\ell'_m}$ are given by functions $q'_{ij}(t)$ where $q'_{11}(t) \equiv 1+t \mod t^{m+1}$ and $q'_{ij}(t) \equiv 1 \mod t^{m+1}$. 

Let $\phi:A_y\rightarrow A_x$ denote the dual of $\phi^{\vee}$. We now lift $\phi$ to an isogeny $\tilde{\phi}$ whose source is the abelian scheme $A_{C_{y,\ell'_m}}$. By Proposition \ref{prop:coordinatesafterisog}, the Serre-Tate coordinates of $\liftphi(A_{C_{y,\ell'_m}})$ are given by $Q(s) = X^{-1}Q'(s^{p^n})Y$, where $X$ and $Y$ are diagonal matrices with $i$th entry $p^{n_i}$ and $p^{n-n_i}$ respectively. It follows that the $m$th order tangent direction of $\liftphi(A_{C_{y,\ell'_m}})$ is equal to $\ell_m$, and therefore that $(x,\ell_m) \in \tau(y,\ell'_m)$. The result follows. 
\end{proof}

Now, let $C\subset \Agr$ be a generically ordinary curve whose geometric fundamental group surjects onto $\Gamma_g(\Z_r)\times\prod_{\ell\neq p,r} \bbP(\Sp_{2g}(\Z_\ell))\times \GL_g(\Z_p)$. 
\begin{proposition}\label{prop:heckeirreducible}
    Let $C$ be as above, and let $\tau$ denote either a $p$-power or a prime-to-$p$ Hecke operator. Then, $\tau(C)$ is geometrically irreducible.
\end{proposition}
\begin{proof}
    The proof is essentially the same in both cases, and the key ingredient is that the monodromy of $C$ equals the monodromy of $\Agr$. We will assume that $\tau$ is $p$-power without any loss of generality. Recall that $\tau$ is defined on $\Agr^{\ord}$, and $\tau(C)$ is defined to be the Zariski closure of $\tau(C^{\ord})$ in $\Agr$. Therefore, it suffices to prove that $\tau(C^{\ord})$ is irreducible. Recall that $\Pr_i(\tau)$ are maps from $\Agr^{\ord}[\tau]$ to $\Agr^{\ord}$. As $\tau$ is defined by $\Pr_1(\tau)_* \Pr_2(\tau)^*$, it suffices to prove that $\Pr_2^*C^{\ord}$ is irreducible. Let $\Ag[p^nr]^{\ord}$ be the level cover of $\Agr^{\ord}$ where the $p$-adic Tate module has been trivialized mod $p^n$. Up to pre-composing by a power of Frobenius \footnote{This is required as the map $\Pr_2$ usually isn't separable.}, the canonical map $\Ag[p^nr]^{\ord} \rightarrow \Agr$ factors through $\Agr^{\ord}[\tau]$. The pullback of $C$ to $\Ag[p^nr]^{\ord}$ is irreducible as $C$ has maximal monodromy. The result follows. 
\end{proof}

\subsection{Zariski-Density of rank-1 formal branches}

\begin{proposition}\label{notalltori}

Let $x \in \Agr(\Fpbar)$ be an ordinary point, and suppose $D\subset \Agr$ is a variety
such that the formal neighbourhood of $D$ at $x$ contains all rank-1 primitive subtori. Then $D=\Agr$. 

\end{proposition}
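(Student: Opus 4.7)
The plan is to use the Serre-Tate formal torus structure on $\widehat{\Ag}^x$ to show that the hypothesis forces the formal completion $\widehat{D}^x$ to equal $\widehat{\Ag}^x$; closedness of $D$ in the irreducible scheme $\Ag$ then forces $D=\Ag$. I read the hypothesis ``$T_xD$ contains all rank-$1$ primitive subtori'' as saying that $\widehat{D}^x$ contains each such formal subtorus as a closed formal subscheme of $\widehat{\Ag}^x$.

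Fix a polarization-compatible basis so that $\widehat{\Ag}^x = \Spf k[[u_{ij}]]_{i\leq j}$ with Serre-Tate parameters $u_{ij} = q_{ij}-1$, and set $M := (T_p(\pdiv_x)^*\otimes T_p(\pdiv_x^\vee)^*)^{\Sym}$, a free $\Z_p$-module of rank $N=\binom{g+1}{2}$. Each primitive $v \in M$ gives a rank-$1$ formal subtorus $\gamma_v\colon \Gmhat \hookrightarrow \widehat{\Ag}^x$ defined in coordinates by $q_{ij}(s) = (1+s)^{v_{ij}}$. The proposition reduces to the following formal density statement: any power series $f \in k[[u_{ij}]]$ which vanishes on every $\gamma_v$ must vanish identically. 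Granted this, the defining ideal of $\widehat{D}^x$ is zero, so $\widehat{D}^x = \widehat{\Ag}^x$.

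To prove the density, I would expand $f = \sum_{n \geq n_0} f_n$ into homogeneous pieces in the $u_{ij}$ and induct on the smallest $n_0$ with $f_{n_0}\neq 0$. Using the binomial expansion $(1+s)^{v_{ij}}-1 = v_{ij}s + \binom{v_{ij}}{2}s^2 + \cdots$, the lowest $s$-coefficient of $f((1+s)^{v_{ij}}-1)$ is $f_{n_0}(v)$ (evaluated via the reduction $\Z_p \to k$), while higher $s$-coefficients involve the binomial coefficients $\binom{v_{ij}}{k}$. Combining these conditions as $v$ varies over primitives --- in particular over distinct primitive lifts of a fixed reduction $\bar v\in\F_p^N$, which yield genuinely different subtori because $(1+s)^{v+pw} = (1+s)^v(1+s^p)^w$ in characteristic $p$ --- forces $f_{n_0}$ to vanish as a polynomial, completing the induction.

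The main obstacle is this characteristic-$p$ density step. Reducing mod $p$ naively is insufficient, since primitive vectors of $M$ reduce only to the finite set $\F_p^N\setminus\{0\}$, on which nontrivial homogeneous polynomials can vanish once the degree exceeds $p$. Instead one must genuinely exploit the $\Z_p$-continuous family of primitive cocharacters together with constraints from all orders of $s$; the functions $v_{ij}\mapsto \binom{v_{ij}}{k}$ form a Mahler basis for continuous functions on $\Z_p$, and this yields enough independent conditions to pin down the coefficients of $f_{n_0}$. Once $\widehat{D}^x = \widehat{\Ag}^x$, the irreducible component of $D$ through $x$ has full dimension $\dim\Ag$, and irreducibility of $\Ag$ gives $D = \Ag$.
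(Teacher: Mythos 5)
Your reduction to a formal density statement --- that a nonzero power series $f \in k[[u_{ij}]]$ cannot vanish on every primitive rank-$1$ formal subtorus of the Serre--Tate torus --- is correct and is exactly the reduction the paper makes. You have also correctly located the essential difficulty: passing to the leading $s$-coefficient only tells you that $f_{n_0}$ vanishes at the mod-$p$ reductions of primitive vectors, which is not enough once $n_0\geq p$. But the fix you sketch does not close the argument. The coefficient of $s^{n_0+m}$ in $f\bigl((1+s)^{v}-1\bigr)$ mixes contributions from $f_{n_0}$ (through the higher binomial terms $\binom{v_{ij}}{k}$ reduced mod $p$) with contributions from $f_{n_0+1},\dots,f_{n_0+m}$, so the system of constraints never isolates $f_{n_0}$ by itself; the induction on $n_0$ you describe has no well-defined first step. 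Observing that $v\mapsto\binom{v}{k}$ is a Mahler basis is true but is not a substitute for the missing argument: one would still have to show that the resulting system of mod-$p$ identities in the base-$p$ digits of $v$ forces $f_{n_0}=0$, and that is precisely the content you have not supplied.

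The paper avoids the induction entirely by evaluating $f$ on one well-chosen primitive subtorus. Take $v\in\Z_p^g$ with $v_i=p^{N^i}$ for a large integer $N$, so the symmetric tensor is $v\otimes v$ and the parametrization becomes $q_{ij}=(1+t)^{p^{N^i+N^j}}$. In characteristic $p$ this equals $1+t^{p^{N^i+N^j}}$, i.e.\ the substitution $q_{ij}-1=t^{p^{N^i+N^j}}$ is \emph{monomial}: all the higher binomial coefficients that entangle your induction vanish identically. Each monomial $\prod(q_{ij}-1)^{r_{ij}}$ of $f$ then evaluates to $t^{M_r}$ with $M_r=\sum r_{ij}\,p^{N^i+N^j}$, and by Dickson's lemma $f$ has only finitely many monomials minimal under divisibility. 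For $N$ sufficiently large the values $N^i+N^j$ (for $1\le i\le j\le g$) are distinct with rapidly growing gaps, so the $M_r$ attached to those finitely many minimal monomials are pairwise distinct; hence $f$ evaluated on this single subtorus has a unique term of least degree in $t$, which cannot cancel, contradicting the assumed vanishing. This concrete substitution, which forces the troublesome binomial terms to disappear, is the idea missing from your proposal.
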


\begin{proof}

We will in fact prove the stronger claim that all primitive rank-1 formal tori are dense in $\Sym^{2}(\Z_p^g)\otimes\Gmhat$. Letting $\{q_{ij}\}$ be the co-ordinates as before, we consider the element $v\in\Z_p^g$ such that $v_i=p^{N^i}$ where $N$ will be chosen to be a large integer. Then the corresponding primitive rank 1 torus has co-ordinates $q_{ij}(t)=t^{p^{N^i+N^j}}$. Note that $\ds \prod_{i,j}q_{ij}^{r_{ij}}$ evaluates at $\vec{Q}(t)$ to  $t^{M}$ where $M_{ij}=\ds\sum_{i,j}r_{ij}p^{N^i+N^j}$. 

Now let $f\in k[[q_{ij}]]$ be a non-zero power series which vanishes on all such primitive rank 1 tori. By choosing $N$ to be large enough, there is a single term of $f(\vec{Q}(t))$ of smallest degree. Therefore $f(\vec{Q}(t)) \neq 0$, which is a contradiction. The result follows. 

%There are finitely many monomials in $f$  which are dominated by all the rest.

\end{proof}

\subsection{Algebraicity of primitive Serre-Tate directions}

Recall that $D \subsetneq \Agr$ is a generically ordinary subvariety. As in \cite[Prop 5.4]{Chai}, we may consider the completion along the diagonal in $D\times D$ as a formal scheme $D^\wedge$ over $D$, and we may also view this as sitting inside the pull back $i^*\Agr^\wedge$, where $i: D\rightarrow \Agr$ is the inclusion. For any integer $m>0$ we may also take the corresponding $m$th order infinitesimal subscheme $D^{\wedge}_m$ over $D$ sitting inside $i^*{\Agr^{\wedge}}_{m}$. Consider the level cover ${\Ag[p^mr]}^{\ord}$ which trivializes the $p$-adic Tate module mod $p^m$, and let $(\Agr^{\wedge})_m/\Ag[p^mr]^{\ord}$ denote the base change of $(\Agr^{\wedge})_{m}$ to $\Ag[p^mr]^{\ord}$. %Now, if we go up to a level cover ${\Ag[p^mr]}^{\ord}$ where the $p$-adic Tate module has been trivialized mod $p^m$, then
We have that $(\Agr^{\wedge})_m/\Ag[p^mr]^{\ord}$ can be trivialized as $\Sym^2(\Z/p^m\Z)^g\otimes {\Gmhat} \times \Ag[p^mr]^{\ord}$. The space of primitive Serre-Tate directions is a closed subscheme of $(\Agr^{\wedge})_m/\Ag[p^mr]^{\ord}$, which descends to a closed subscheme of $(\Agr^{\wedge})_m/\Agr^{\ord}$. Let $R_m$ denote the restriction of this closed subscheme to $D$ -- its fibers at closed points consist of the unions of all primitive Serre-Tate directions to order $m$. Both $R_m$ and $D^{\wedge}_m$ are subschemes of $i^*(\Agr^{\wedge}_m)$. %Moreover, $R_m$ is \'etale-locally a product, thus its restriction to any closed subvariety of $D$ is irreducible. 
It follows that there is a Zariski-closed subscheme of $D$ consisting of all the points at which $D$ contains all the primitive Serre-Tate directions to order $m$. We therefore have the result below that follows directly from the above analysis, Proposition \ref{notalltori}, and the Noetherian property. 
\begin{proposition}\label{prop: Dstempty}
 Let $D\subsetneq \Agr$ be a generically ordinary subvariety. There exists a positive integer $N$ such that for every ordinary point $x$, there exists an order $N$ primitive Serre-Tate direction of $x$ that isn't contained in $D$.
\end{proposition}

\subsection{Proof of Theorem \ref{thm:main}}

Suppose that $D\subsetneq \Agr$ is a subvariety, and let $N$ be as in Proposition \ref{prop: Dstempty}. Let $x\in\Ag(\F_q)$ be a hypersymmetric point. %If $D$ does not contain any $\Fpbar$-point isogenous to $x$, the result follows simply by picking a curve $C$ passing through $x$. Thus, at the cost of increasing the finite field $\F_q$, we assume $D$ contains an 
%$\F_q$-rational point of $\Agr$ isogenous to $x$. %Choose $N$ as in Proposition \ref{prop: Dstempty}. 
%By Lemma \ref{notalltori} it follows that $D$ does not contain all formal subtori passing through $x$. It follows that there is some positive integer $M$ such that $T^M_xD$ does not contain all $M$'th order pure Serre-Tate directions. Thus, by the discussion in the previous subsection, $x\in D_{st,M}$. It follows by Noetherian-ness that the tower $D_{st,M}$ stabilizes, and thus there exists an integer $M$ for which the above statement is true for all hypersymmetric points. 

We increase $\F_q$ so that the pre-images of $\pi(x)$ in $\Ag[r]$ are defined over $\F_q$, where $\pi$ is the canonical map from $\Agr$ to $\Ag$. We use Theorem \ref{congmain} to produce a curve $C\subset\Ag[r]$ which satisfies the following.
\begin{enumerate}
    \item For every hypersymmetric point over $\F_q$ isogenous to $x$, and for every $N$th order primitive Serre-Tate direction $\eta$ through $x$, $C$ admits a $\F_q[[t]]$ point specializing to $\eta$.
    \item The geometric fundamental group of $C$ surjects onto $\Gamma_g(\Z_r)\times\prod_{\ell\neq p,r} \bbP(\Sp_{2g}(\Z_\ell))\times \GL_g(\Z_p)$.
\end{enumerate}

Let $C'\subset \Agr$ denote an irreducible curve with the property $A/k(C')$ is geometrically isogenous to $A/k(C)$ (where $A/k(C)$ and $A/k(C')$ are the universal abelian scheme pulled back to the function fields of $C,C'$ respectively). We then have that $C'$ must be an irreducible component of some Hecke translate of $C$. By condition 2 above, every Hecke translate of $C$ is irreducible. Therefore, it suffices to show that $\tau(C)\not\subset D$ for every Hecke operator $\tau$. We factor $\tau=\tau_e\tau_p$ for $\tau_e$ a prime-to-$p$ Hecke operator and $\tau_p$ a $p$-power Hecke operator.

By Theorem \ref{thm:ppowerheckesmooth}, the curve $\tau_p(C)$ also satisfies condition 1 above - in particular, $x$ is also a point of $\tau_p(C)$. Now since $\tau(C)$ is irreducible, it follows that for each point $y\in \tau_e x$ the formal neighborhood of $\tau(C)=\tau_e(\tau_p(C))$ at $y$ contains the image of the formal neighborhood of $\tau_p(C)$ at $x$. Since $\tau_e$ is \'etale, for every $N$th order primitive Serre-Tate direction $\eta'$ through $y$, we have that $\tau(C)$ admits an $\F[[t]]$ point specializing to $\eta'$. Therefore, $\tau(C)$ cannot be contained in $D$ by Proposition \ref{prop: Dstempty}. 

The result about function field valued points follows immediately by considering the generic point of $C$. We have therefore proved the existence of a Hodge-generic point of $\Agr$ which does not admit a \emph{polarized} isogeny to any point of $D$. The statement about arbitrary isogenies follows from the fact that any isogeny between Hodge-generic principally polarized abelian varieties must be a polarized isogeny. This finishes the proof of Theorem \ref{thm:main}.

$\medskip\square$

\subsection{Proof of Theorem 1.1}

We now suppose that $D\subsetneq (\Agr)_{F}$ is a subvariety. By replacing $D$ by its Galois translates, we may assume that $D$ is defined over $\F_q(T)$. 
Identifying $\F_q(T)$ with the function field of $\bbP^1$ we may realize $D$ as the generic fiber\footnote{That is, we may treat $\cD$ as being fibered over $\bbP^1$, with the property that $D$ is the generic fiber of $\cD$.} of a (strict) subvariety $\cD\subset (\Agr\times\bbP^1)_{\F_q}$, no generic points of which are contained in a fiber. We let $E\subset(\Agr)_{\Fpbar}$ be the constant part $D$, so that $E_F\subset D$ and $E$ is the maximum such subvariety\footnote{One may construct $E$ by intersecting all base changes of $D$ along automorphisms of $F$ over $\Fpbar$.}. By increasing $q$ we assume $E$ is defined over $\F_q$. We are looking for a curve $C\subset (\Agr\times\bbP^1)_{\F_q}$ such that $\tau(C) \not\subset \cD$ for every Hecke operator $\tau$. We shall need the following two lemmas:

\begin{lemma}\label{galois}
Let  $K\subset L$ be a finite separable extension of fields, and let $G$ be the Galois group of the normal closure of $L$ over $K$. Suppose that $H$ is a profinite group with no irreducible constituent in common with $G$, and let $\phi:\Gal_K\ra H$ be a group homomorphism. Then $\phi(\Gal_L)=\phi(\Gal_K)$.
\end{lemma}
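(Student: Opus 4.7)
The plan is to exploit the normal closure to reduce the question to a statement about composition factors of finite groups. Let $N$ denote the Galois closure of $L/K$ inside a fixed separable closure, so that $\Gal_N \subset \Gal_L \subset \Gal_K$, the subgroup $\Gal_N$ is normal in $\Gal_K$, and the quotient $\Gal_K / \Gal_N$ is precisely $G$. Applying $\phi$ preserves these inclusions, and normality is preserved by surjections, so $\phi(\Gal_N)$ is a normal subgroup of $\phi(\Gal_K)$.

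The key observation is then a sandwich argument on the quotient $Q := \phi(\Gal_K)/\phi(\Gal_N)$. On one hand, the surjection $\Gal_K \twoheadrightarrow \phi(\Gal_K)$ carries $\Gal_N$ onto $\phi(\Gal_N)$, so it descends to a surjection $G = \Gal_K/\Gal_N \twoheadrightarrow Q$; thus every (finite simple) composition factor of $Q$ is a composition factor of $G$. On the other hand, $Q$ is itself a subquotient of the profinite group $H$ (it is a quotient of the subgroup $\phi(\Gal_K) \subset H$), so every composition factor of $Q$ is also a composition factor of $H$. By the hypothesis that $G$ and $H$ share no irreducible constituent, $Q$ has no composition factors, and hence $Q$ is trivial.

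It follows that $\phi(\Gal_K) = \phi(\Gal_N)$. Since $\phi(\Gal_N) \subset \phi(\Gal_L) \subset \phi(\Gal_K)$, we conclude $\phi(\Gal_L) = \phi(\Gal_K)$, as required. The only subtle point is ensuring that "irreducible constituent" for the profinite group $H$ really is captured by the composition factors of its finite subquotients — but this is exactly the standard definition, so no obstacle arises. The argument is otherwise purely formal group theory, with no need to inspect the specific structure of $G$ or $H$.
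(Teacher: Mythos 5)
Your proof is correct and takes essentially the same route as the paper: both pass to the Galois closure $N$ of $L/K$, observe that the finite group $G = \Gal_K/\Gal_N$ surjects onto the quotient $Q = \phi(\Gal_K)/\phi(\Gal_N)$, and conclude $Q$ is trivial since its composition factors would have to lie in both $G$ and $H$. You spell out the sandwich $\phi(\Gal_N)\subset\phi(\Gal_L)\subset\phi(\Gal_K)$ explicitly where the paper compresses this into "by increasing $L$," but the substance is identical.
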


\begin{proof}

By increasing $L$ we may as well assume $L/K$ is Galois. Then $\Gal_L\subset\Gal_K$ is normal, and so 
we get a map $G\ra\phi(\Gal_K)/\phi(\Gal_L)$, which must be trivial by our assumptions. The statement follows.
\end{proof}

\begin{lemma}\label{uniform}

Let $\cD\subset (\Agr\times\bbP^1)_{\F_q}$ be as above. There is an integer $M$ such that for any $M$ distinct points $x_1,\dots,x_M$ of $\bbP^1(\Fpbar)$, we have $\ds\bigcap_{i=1}^M \cD_{x_i}=E$.

\end{lemma}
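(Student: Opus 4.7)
The plan is to reformulate the lemma as a uniform bound on the size of the sets
\[
\Lambda(W) := \{x \in \bbP^1(\ol{k}) : W \subset \cD_x\}
\]
as $W$ ranges over irreducible subvarieties of $(\Ag)_{\ol{k}}$ not contained in $E$, and then to prove the bound via a flattening stratification of the projection $p_1 : \cD \to \Ag$. The reduction to a uniform bound is clean: for any $M$ distinct $x_1, \dots, x_M \in \bbP^1(\ol{k})$, the intersection $\bigcap_i \cD_{x_i}$ automatically contains $E$ since $E \times \bbP^1 \subset \cD$, and conversely every irreducible component $W$ of $\bigcap_i \cD_{x_i}$ satisfies $\{x_1, \dots, x_M\} \subset \Lambda(W)$; so if $|\Lambda(W)| \leq M-1$ for every $W \not\subset E$, all such components must lie in $E$, forcing $\bigcap_i \cD_{x_i} = E$.

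To analyze $\Lambda(W)$, note that $W \subset \cD_x$ is equivalent to $\eta_W \in \cD_x$, where $\eta_W$ is the generic point of $W$, which in turn is equivalent to $x$ lying in the image in $\bbP^1$ of the scheme-theoretic fiber $p_1^{-1}(\eta_W) \subset \bbP^1_{\kappa(\eta_W)}$. Since $p_1$ is proper (as $\cD \hookrightarrow \Ag \times \bbP^1$ and $\bbP^1$ is proper over the base), this fiber is closed in $\bbP^1_{\kappa(\eta_W)}$, and is therefore either all of $\bbP^1_{\kappa(\eta_W)}$ — in which case $\{\eta_W\} \times \bbP^1 \subset \cD$, so $W \subset E$ by maximality of $E$ — or is zero-dimensional of some finite length, which bounds $|\Lambda(W)|$ by that length.

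To obtain uniformity I would apply the flattening stratification (equivalently, iterate generic flatness) for the proper morphism $p_1 : \cD \to \Ag$, decomposing $\Ag$ into a finite collection of locally closed strata $S_1, \dots, S_r$ over each of which $p_1$ is flat. On each such stratum the fibers of $p_1$ share a common Hilbert polynomial, so either they are all equal to $\bbP^1$, in which case $\ol{S_i} \times \bbP^1 \subset \cD$ and hence $\ol{S_i} \subset E$, or they are all finite of a common length $\ell_i$. For any irreducible $W \not\subset E$, the point $\eta_W$ lies in some stratum $S_i$ with $\ol{S_i} \not\subset E$, and hence $|\Lambda(W)| \leq \ell_i$. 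Setting $M - 1 := \max_i \ell_i$ over those finitely many strata gives the required uniform bound.

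The main obstacle I anticipate is really just this uniformity: a non-uniform version of the statement (the existence of some specific finite collection of points whose fibers intersect to $E$) is immediate from Noetherianity applied to descending intersections of the $\cD_x$, but extending this to \emph{any} $M$ distinct fibers requires exploiting the fact that the family $\{\cD_x\}_{x \in \bbP^1}$ has only finitely many ``types'' up to the flattening stratification, which is precisely where the constructibility/flatness input does the real work.
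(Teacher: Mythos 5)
Your proof is correct, and it takes a genuinely different route from the paper's. The paper argues by contradiction after base changing to an uncountable field: it considers the constructible sets $S_M$ of first coordinates $x_1$ that can be completed to an $M$-tuple with $\bigcap_i\cD_{x_i}\neq E$, uses the finite-or-cofinite dichotomy for constructible subsets of $\bbP^1$ together with uncountability to extract an infinite sequence $x_1,x_2,\dots$ with $\bigcap_{i\le m}\cD_{x_i}\neq E$ for all $m$, and then lets Noetherianity stabilize this intersection to some $E'\supsetneq E$ which must sit in all fibers, contradicting the maximality of $E$. You instead reduce to bounding $|\Lambda(W)|$ for irreducible $W\not\subset E$, observe via the generic point that $\Lambda(W)$ is controlled by the fiber $p_1^{-1}(\eta_W)\subset\bbP^1_{\kappa(\eta_W)}$ (either all of $\bbP^1$, forcing $W\subset E$, or finite), and then get uniformity from the flattening stratification of the projective morphism $p_1\colon\cD\to\Ag$: on each stratum the fiber Hilbert polynomial is constant, so the finite fibers have a common length, and $M-1$ is the maximum of these lengths. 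Your approach avoids the detour through an uncountable base field and produces an explicit bound for $M$, at the price of invoking the flattening stratification rather than mere constructibility; the paper's argument is softer and shorter but only yields existence. One small presentational point: properness of $p_1$ is not what makes $p_1^{-1}(\eta_W)$ closed in $\bbP^1_{\kappa(\eta_W)}$ (that is automatic from $\cD$ being closed in $\Ag\times\bbP^1$); properness/projectivity is what you really need for the flattening stratification and for the Hilbert polynomial to be well defined and constant on strata.
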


\begin{proof}

We will base change to an uncountable field $k$, and prove the statement there (from which the original statement clearly follows).

For any $M$, consider the set $S_M$ of points $x_1$ for which one can find distinct $x_2,\dots,x_M$ such that $\ds\bigcap_{i=1}^M \cD_{x_i}\neq E$. This is clearly a constructible set, so is either finite or co-finite. Moreover the set $S_M$ is clearly descending (perhaps non-strictly) with $M$. It follows that if $S_M$ is not eventually empty, then $\bigcap_M S_M\neq\emptyset.$ Thus we may find a point $x_1$ which is contained in this intersection.

We may now iterate with $x_2$, etc, to produce a countably infinite sequence of points $x_i$ such that 
$\ds\bigcap_{i=1}^m \cD_{x_i}\neq E$ for every positive integer $m$. By the Noetherian property, this intersection eventually stabilizes to a subvariety $E'$ properly containing $E$. This $E'$ is contained in infinitely many fibers of $\cD$ and hence in all of them. Thus $E'_F\subset\cD$ contradicting the definition of $E$.

\end{proof}

%We pick a hypersymmetric point $x\in\Ag(\ol k)$ and an integer $M$ such that $T^M_yE$ does not contain all $M$'th pure Serre-Tate directions for all points hypersymmetric points $y$.
We now construct our curve $C$ as follows. Following Theorem \ref{thm:main} we construct an irreducible curve $C_0\subset \Ag[r]$ such that
\begin{enumerate}
    \item No Hecke translate of $C_0$ is contained in $E$.
    %For every hypersymmetric point defined over $k$ isogenous to $x$, and every $M$'th order pure Serre-Tate direction $\eta$ through $x$, $C_0$ admits a $\ol{k}[[t]]$ point which specializes to $\eta$.
    \item The geometric fundamental group of $C_0\cap\Ag^{\ord}[r]$ surjects onto $\Gamma_g(\Z_r)\times\prod_{\ell\neq p,r} \bbP(\Sp_{2g}(\Z_\ell))\times \GL_g(\Z_p)$.
\end{enumerate}

%We do this the same way as in the proof of Theorem \ref{thm:main1}. 
We pick $N$ sufficiently large so that the alternating group $A_N$ is not a sub-quotient of $\prod_{\ell\neq p} \bbP(\Sp_{2g}(\Z_\ell))\times \GL_g(\Z_p)$, and let $\phi:\bbP^1\ra\bbP^1$ be a degree $N$ map whose Galois group is $A_N$. Next, we pick a map $\psi:C_0\ra\bbP^1$ arbitrarily, and let $C_1$ be the graph of this map in $\Ag[r]\times\bbP^1$. Finally, we let $C$ be the base change of this map under $\Id_{\Ag[r]}\times\phi$, for $N>\deg\psi$. It follows that $C$ is irreducible since $A_N$ does not admit a non-trivial homomorphism to $S_{\deg\psi}$. 

By Lemma \ref{galois}, the geometric fundamental group of $C$ surjects onto $\Gamma_g(\Z_r)\times\prod_{\ell\neq p,r} \bbP(\Sp_{2g}(\Z_\ell))\times \GL_g(\Z_p)$. It follows that $\tau(C)$ is irreducible for each Hecke operator $\tau$. Suppose that $\tau(C)\subset \cD$. For a generic point $s\in\bbP^1$, the fiber $\phi^{-1}(s)$ consists of $N$ points $x_1,\dots,x_N$, and so it follows from Lemma \ref{uniform} that $$(\tau(C_1))_{s}=\bigcap_{i=1}^N (\tau(C))_{x_i}\subset\bigcap_{i=1}^N \cD_{x_i}=E.$$ It follows that $\tau(C_1)\subset E$. Since $E$ is constant, by projecting to $\Agr$, this in fact means that $\tau(C_0)\subset E$ which is a contradiction.

\section{Additional results}
%We fix a smooth toroidal compactification $\Agtor$ of $\Ag$, and let $\abvar\rightarrow \Agtor$ denote a semi-abelian scheme extending $\abvar \rightarrow \Ag$, and 
\subsection{The case of number fields}
We work now over $\Qbar$, so in particular in characteristic 0. Let $H$ denote the Hodge line bundle on $\cA_g[r]$. For any proper curve $C\subset \cA_g[r]$, we define $(C.H)$ to denote the degree of $H$ pulled back to $C$.

%The Hecke action on $\Ag$ doesn't extend to the boundary. However, given any proper curve $C \subset \Agtor$ which is generically contained in $\Ag$, we define $T(C)$ to equal the Zariski closure in $\Agtor$ of $T(C\cap \Ag)$. 

\begin{lemma}\label{ellheckedegree}
Let $\tau$ denote a Hecke operator on $\cA_g[r]$. Then, for any proper curve $C\subset \cA_g[r]$ we have $\deg \tau(C) = \deg(\tau) \cdot \deg(C)$.  
\end{lemma}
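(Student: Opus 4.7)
The plan is to use the projection formula in intersection theory, combined with the observation that Hecke correspondences respect the Hodge bundle up to isomorphism.

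First, I would set up notation: let $Z_T \subset \Ag \times \Ag$ denote the Hecke correspondence associated to $T$, with projections $p_1, p_2 \colon Z_T \to \Ag$. By definition, both $p_1$ and $p_2$ are finite of degree $\deg(T)$, and $T(C)$ as a $1$-cycle equals $(p_1)_*(p_2^* C)$. The degree $(C.H)$ is computed as $\deg(H|_C)$, which makes sense even though $\Ag$ is not proper, because $C$ is proper.

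Second, the key geometric observation is that $p_1^* H \cong p_2^* H$ as line bundles on $Z_T$. Indeed, the Hecke correspondence carries a universal isogeny $\phi \colon \cA_1 \to \cA_2$ between the pullbacks of the universal abelian scheme under the two projections. Since we are in characteristic zero, every such isogeny is \'etale, so $\phi$ induces an isomorphism $d\phi \colon \Lie(\cA_1) \xrightarrow{\sim} \Lie(\cA_2)$ of vector bundles on $Z_T$. Taking determinants of the duals gives the identification $p_1^* H = \det(\Lie \cA_1)^\vee \cong \det(\Lie \cA_2)^\vee = p_2^* H$.

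Third, combining these ingredients via the projection formula:
\[
\deg T(C) = T(C) \cdot H = (p_1)_*(p_2^* C) \cdot H = p_2^* C \cdot p_1^* H = p_2^* C \cdot p_2^* H = \deg(p_2) \cdot (C \cdot H) = \deg(T)\cdot \deg(C),
\]
where in the last step I use that $p_2$ restricted to $p_2^{-1}(C)$ has degree $\deg(p_2) = \deg(T)$, so the pullback class $p_2^* C$ intersects $p_2^* H$ with $\deg(T)$ times the intersection of $C$ with $H$.

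There is no real obstacle: the content of the lemma is just the identity $p_1^* H = p_2^* H$, which is immediate from the characteristic zero hypothesis, together with standard projection formula manipulations. The only subtlety worth mentioning is the convention for $T(C)$ as a cycle (i.e., with multiplicity as $(p_1)_* p_2^* C$); interpreting $T(C)$ set-theoretically would only change the formula by a generically trivial multiplicity factor and does not affect the applications we need.
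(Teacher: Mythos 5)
Your proof is correct and takes essentially the same approach as the paper: the paper's one-line proof asserts precisely that the two pullbacks of $H$ to the Hecke correspondence agree, and you supply the justification (the universal isogeny is \'etale in characteristic zero, hence $d\phi$ identifies the determinant line bundles) together with the standard projection-formula bookkeeping. The only thing worth noting is that your characteristic-zero hypothesis matches exactly how the lemma is used in Section 4.1, and when the authors later invoke the same intersection-theoretic argument over $\overline{\F}_p$ they restrict to prime-to-$p$ Hecke operators, for which your \'etaleness argument still goes through verbatim.
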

\begin{proof}
Let $\Agr[\tau] \subset \Agr \times \Agr$ denote graph of $\tau$, and let $\Pr_i[\tau]$ denote the two projections restricted to $\Agr[\tau]$. It suffices to prove that $\Pr_1[\tau]^*H$ is isomorphic to $\Pr_2[\tau]^*H$. 

Let $\phi: A \rightarrow B$ denote the universal isogeny on $\Agr[\tau]$. The pullback of $\wedge^g\Omega^1_{A/\Agr[\tau]}$ via the identity section is isomorphic to $\Pr_1[\tau]^*H$, and the pullback of $\wedge^g \Omega^1_{B/\Agr[\tau]}$ via the identity section is isomorphic to $\Pr_2[\tau]^*H$. Therefore, it suffices to prove that $\phi^* \Omega^1_{B/\Agr[\tau]} \simeq \Omega^1_{A/\Agr[\tau]}$. This is true because $\phi$ is \'etale, as it is an isogeny in characteristic 0.
\end{proof}

\begin{defn}
Let $X$ be some variety defined over a field $L$, and let $K/L$ denote a finite extension. We say that a point $x\in X(K)$ is \emph{primitive} if $K$ is the smallest field of definition of $x$.
\end{defn}

\begin{lemma}\label{pointlargedegreelargemonodromy}
Let $C\subset \cA_g[r]$ denote a curve defined over $\Q$ with maximal monodromy. There exists a constant $c \in \Z$ such that for any positive integer $n\in \Z$, there exists a number field $K/\Q$ which is Galois and has degree $\geq n$, and a primitive point $x\in C(K)$ such that the monodromy of $A_x$ has index bounded by $c$ in $\GSp_{2g}(\hat{\Z})$.
\end{lemma}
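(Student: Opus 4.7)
The plan is to combine Zywina's big-monodromy theorem with Hilbert's irreducibility theorem to produce the required primitive Galois points.

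\emph{Step 1 (Zywina's thin-set bound).} By Zywina's theorem applied to the abelian scheme over $C$, whose geometric monodromy is assumed maximal, there exist a constant $c=c(C)$ and a thin subset $\Sigma \subset C(\overline{\Q})$ in the sense of Serre such that for every closed point $x$ of $C$ with $x \notin \Sigma$, the image $\rho_{A_x}(\Gal_{\overline{\Q}/\Q(x)})$ has index at most $c$ in $\GSp_{2g}(\hat{\Z})$. This $c$ will be the constant asserted in the lemma.

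\emph{Step 2 (Primitive Galois closed points of large degree).} For each $n$ we must produce a closed point of $C$ whose residue field is Galois over $\Q$ of degree at least $n$. Fix a non-constant $\Q$-morphism $\phi: C \to \P^1$ and, for each $n$, form the composite $\tilde{\phi}_n := \alpha_n \circ \phi : C \to \P^1$, where $\alpha_n(z)=z^n$. The Galois closure of $\tilde{\phi}_n$ over $\Q(\P^1)$ contains a copy of $\mu_n$, and in particular has order tending to infinity with $n$. Apply Hilbert's irreducibility theorem to this Galois closure: for $t \in \P^1(\Q)$ outside a thin set, the splitting field of $\tilde{\phi}_n^{-1}(t)$ is Galois over $\Q$ of maximal possible degree. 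A careful analysis of the stabilizers in the wreath-type Galois structure allows one to extract from this fiber a closed point of $C$ whose residue field $K_n$ is itself Galois of degree at least $n$; alternatively, one may work with explicit cyclotomic or iterated Kummer towers over $\Q(C)$ where the Galois structure of the residue field is manifest.

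\emph{Step 3 (Combination).} The thin set $\Sigma$ of Step 1 pulls back via $\tilde{\phi}_n$ to a thin subset of $\P^1(\Q)$, so we may select $t$ avoiding both this pullback and the Hilbert-thin set from Step 2. The resulting closed point of $C$ is a primitive $K_n$-point lying outside $\Sigma$, and therefore the monodromy of $A_x$ has index at most $c$ in $\GSp_{2g}(\hat{\Z})$, as required. Varying $n$ yields the assertion.

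The main obstacle is Step 2: ensuring that the primitive closed point produced has \emph{Galois} residue field, not merely residue field contained in some Galois extension. The construction via $\alpha_n \circ \phi$ naturally produces points whose \emph{splitting} field is Galois of large degree, but the residue field of an individual closed point is a priori only a subfield fixed by a stabilizer. The delicate point is to arrange this stabilizer to be normal, which is where the cyclotomic or Kummer structure (rather than a generic $\phi$) is essential.
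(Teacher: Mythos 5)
Your Step 1 asserts a statement that is false as written, and this gap is fatal. You claim a single constant $c = c(C)$ and a single thin set $\Sigma$ such that \emph{every} closed point $x$ of $C$ outside $\Sigma$ has $[\GSp_{2g}(\hat{\Z}) : \rho_{A_x}(\Gal_{\Q(x)})] \leq c$. This cannot hold uniformly over all residue fields $\Q(x)$: if $x$ is a closed point whose residue field contains a large cyclotomic field $\Q(\mu_m)$, then already the similitude character of $\rho_{A_x}$ restricted to $\Gal_{\Q(x)}$ has image of index at least $[\Q(\mu_m):\Q]$ inside $\hat{\Z}^\times$, so the index of the monodromy in $\GSp_{2g}(\hat{\Z})$ grows without bound as $m\to\infty$. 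And such points are not excluded by any reasonable thin set. What Zywina actually proves is a bound $c_L$ that may depend on the base number field $L$, and the central subtlety of the lemma --- which your proposal entirely elides --- is how to obtain a bound that does not deteriorate as $[L:\Q] \to \infty$.

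The paper resolves exactly this subtlety by a careful choice of the auxiliary field $L$. It takes $L/\Q$ Galois with $\Gal(L/\Q)$ isomorphic to a congruence quotient $G(\Z/\ell^n\Z)$ of an exceptional simple group, so that $L$ has no common constituent with $\GSp_{2g}(\hat{\Z})$; by Zywina's explicit description of the constant, passing from $\Q$ to such an $L$ does not change $c$, since the geometric monodromy of the family base-changed to $L$ is unchanged. It then works with $B = \Res_{C/\Pone}(A)$ over $\Pone$ so that the density-one statement can be applied to $\Pone(L)$, and uses Hilbert irreducibility to produce a primitive degree-$d$ point $x_1$ above a good $y \in \Pone(L)$, tracking monodromy through the identification $B_y = \prod_i A_{x_i}$. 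Your route via $z \mapsto z^n$ is genuinely different, but besides the false Step 1 it also has the gap you yourself flag in Step 2 (the residue field of an individual closed point in a fiber of $\alpha_n \circ \phi$ need not be Galois over $\Q$), which you do not close. Both gaps would need to be repaired --- and the first one requires importing the very field-selection idea the paper develops --- before this argument could stand.
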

\begin{proof}
Let $\pi: C\rightarrow \Pone$ denote a finite map of degree $d$, and consider $B = \Res_{C/\Pone}(A)$, which is an abelian scheme over an open subset $U\subset \Pone$ of dimension $gd$. Work of Zywina \cite[Theorem 1.1]{Zywina} implies that for any number field $L$, there exists a constant $c_L$ such that for a density 1 set of points $y\in U(L)$, the monodromy of $B_y$ has index bounded by $c_L$ in the monodromy of $B_U$. In Section 1.2 of \emph{loc. cit.}, Zywina explicitly describes the constant $c_L$. Indeed, if $L/\Q$ were a Galois extension which has the property that the monodromy of $B_{U_L}$ were the same as the monodromy of $B_{U_{\Q}}$, then the constant $c_L$ equals $c_{\Q}$. It is easy to see that there exist fields $L$ with this property with $[L:\Q]$ arbitrarily large; for example, by choosing $L$ so that $\Gal(L/\Q) = \textrm{PSL}_N(\F_\ell)$, where $N>2\dim B$ and $\ell$ is any sufficiently large prime.

We now use the above discussion to deduce the existence of number fields $K$ with arbitrarily large degree and points $x\in C(K)$ with monodromy as claimed. Let $y\in U(L)$ satisfy the conclusions of the previous paragraph. By Hilbert irreducibility, we may also assume that that $\pi^{-1}(y) = \{x_1\hdots x_d \}$, where $x_i\in C(K_i)$, where $K_i/L$ are conjugate degree $d$ extensions of $L$ and the points $x_i$ are conjugate to each other. We have that $\prod_{i=1}^d A_{x_i}$ descends to $L$ and equals  $ B_y$, and that the monodromy of $(\prod_{i=1}^d A_{x_i})_{K_1}$ equals the intersections of the monodromies of $B_C$ and $B_y$ inside the monodromy of $B_U$. It then follows that the monodromy of $(\prod_{i=1}^d A_{x_i})_{K_1}$ lies in the monodromy of $B_y$ and is the subgroup fixing the Tate-module of the first factor $A_{x_1}$. %Thus it can be described as the intersection of the monodromy of $B_y$ with that of $B_C$. 
It follows that the monodromy group of $(\prod_{i=1}^d A_{x_i})_{K_1}$ has index bounded by $c = c_{\Q}$ inside the monodromy of $B_C$, and projecting onto the first factor yields that the monodromy of $A_{x_1}$ has index bounded by $c$ in the monodromy of $A_C$, which is equal to $\GSp_{2g}(\hat{\Z})$ by our assumption on the curve $C$. The lemma follows by taking $K = K_1$ and $x = x_1$.

\end{proof}

\begin{theorem}\label{mainnumberfield}
Let $V\subsetneq \cA_g[r]$ denote a closed subvariety over $\overline{\Q}$. Then, there exists an abelian variety over a number field which is not isogenous to any point of $V$. 
\end{theorem}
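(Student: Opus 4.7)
The plan is to combine a suitably constructed curve $C\subset\cA_g$ with Lemma \ref{pointlargedegreelargemonodromy} and a Bezout-style intersection count. As a first step I would construct an irreducible curve $C\subset\cA_g$ defined over $\Q$ such that (i) the geometric monodromy of $C$ surjects onto $\GSp_{2g}(\hat\Z)$, and (ii) no Hecke translate $T(C)$ is contained in $V$. This is the characteristic-zero analogue of the curve produced in Theorem \ref{thm:main} but is considerably easier, since all Hecke operators are \'etale; property (ii) follows by a Bertini-style argument combined with the observation that no irreducible curve of sufficiently general complete-intersection type can lie in every proper subvariety $T^{-1}V$ simultaneously. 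Without loss of generality, by replacing $V$ with a divisor containing it, we may assume $V$ is a divisor.

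Apply Lemma \ref{pointlargedegreelargemonodromy} to obtain, for each integer $n$, a Galois number field $K/\Q$ of degree at least $n$ and a primitive point $x\in C(K)$ such that the image of the Galois representation $\rho_x\colon\Gal_K\to\GSp_{2g}(\hat\Z)$ has index at most a fixed constant $c$. Suppose for contradiction that $x$ is isogenous to some $y\in V(\ol{\Q})$, and pick a Hecke correspondence $T$ with $(x,y)\in T$, of degree $d$. The kernel $H$ of the isogeny $A_x\to A_y$ has $\GSp_{2g}(\hat\Z)$-orbit of size exactly $d$, so the $\rho_x(\Gal_K)$-orbit of $H$ has size at least $d/c$. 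Big monodromy forces $\End(A_x)=\Z$, hence $\Aut(A_x)=\{\pm 1\}$, which acts trivially on subgroups; so the $\Gal_K$-orbit of $y=A_x/H$ coincides with that of $H$, giving
$$[\Q(x,y):\Q]=[K\cdot\Q(y):K]\cdot[K:\Q]\ge \frac{dn}{c}.$$

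On the other hand, because $C$, $V$, and $T$ are all defined over $\Q$, every Galois conjugate $(x^\sigma,y^\sigma)$ lies in $T\cap(C\times V)$. Since $T(C)$ is irreducible by the big monodromy of $C$, is not contained in $V$ by construction, and the projection $\Pr_1^{-1}(C)\to T(C)$ is birational, the set $T\cap(C\times V)$ is finite with
$$|T\cap(C\times V)|\le \deg T(C)\cdot\deg V=d\cdot\deg C\cdot\deg V,$$
by Lemma \ref{ellheckedegree} combined with Bezout in a projective compactification of $\cA_g$. Combining gives $dn/c\le d\cdot\deg C\cdot\deg V$, so $n\le c\cdot\deg C\cdot\deg V$. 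Choosing $n$ larger than this bound yields the desired contradiction. The main obstacle is step (ii) in the construction of $C$: ensuring the countable family of conditions $T(C)\not\subset V$ simultaneously requires a careful avoidance argument, since a priori each Hecke operator contributes a distinct proper subvariety $T^{-1}V$ that $C$ must not lie inside.
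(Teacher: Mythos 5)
Your proposal is correct and follows essentially the same strategy as the paper: build a proper curve $C/\Q$ with maximal monodromy and $T(C)\not\subset V$ for every Hecke operator, invoke Lemma \ref{pointlargedegreelargemonodromy} to get a primitive point $x\in C(K)$ with $[K:\Q]\ge n$ and Galois image of index $\le c$ in $\GSp_{2g}(\hat\Z)$, and then compare the size of the Galois orbit of a hypothetical isogenous point $y\in V$ against the intersection number controlled by Lemma \ref{ellheckedegree} and $\mathrm{Pic}(\Ag)=\Z$. The one real difference is bookkeeping: you count distinct points of the correspondence $T\subset\Ag\times\Ag$ containing all conjugates $(x^\sigma,y^\sigma)$, whereas the paper counts the intersection $T(C)\cdot V$ in $\Ag$ and must argue that a point $y$ occurring in both $T(\sigma_1(x))\cap V$ and $T(\sigma_2(x))\cap V$ contributes twice via distinct branches of $T(C)$; your version sidesteps that multiplicity discussion, and you also make explicit the use of $\End(A_x)=\Z$ (hence $\Aut(A_x)=\{\pm 1\}$) to identify the orbit of $y=A_x/H$ with the orbit of $H$, a step the paper leaves implicit. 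One small caution: the bound $|T\cap(C\times V)|\le \deg T(C)\cdot\deg V$ is not best justified via birationality of $\Pr_1^{-1}(C)\to T(C)$ (which need not hold); the clean route is to compute $\Pr_1^*[C]\cdot\Pr_2^*[V]$ on $T$ directly, using $\Pr_1^*H=\Pr_2^*H$ from Lemma \ref{ellheckedegree} together with the projection formula, which yields $\deg(T)\cdot(C\cdot V)$ exactly as the paper does.
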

\begin{proof}
We first note that the result is trivial for $g = 1$, so we will assume throughout that $g>1$. Without loss of generality, we suppose that $V$ is a divisor, and by replacing $V$ by the union of its Galois conjugates, we may assume that $V$ is defined over $\Q$. Moreover, by increasing $V$ we may assume that $\cO(V)$ is a power of the Hodge bundle $H$. This implies that $(C\cdot V) >0$ for every proper curve $C\subset \Agr$. 

We now proceed as in the positive characteristic setting (but with a much easier argument) to find a curve $C\subset \cA_g[r]$ having maximal monodromy which satisfies $\tau(C) \not\subset V$ for every Hecke operator $\tau$. By a Noetherianity argument, we first find a positive integer $k$ such that for all points $x\in\cA_g[r]$, $V$ does not contain $\Spec\cO_{\cA_g[r],x}/(\mathfrak{m}_x^k)$. Next, by using the characteristic-zero Lefschetz theorem detailed in \cite[Theorem 1.2]{GorMacMorse} for smooth quasi-projective varieties, and using the fact that the boundary is codimension $>1$ in the Bailey-Borel compactification (recall that $g > 1$), we may find a proper irreducible curve $C\subset \cA_g[r]$ defined over $\Q$ with maximal monodromy such that $C$ does contain $\Spec\cO_{\cA_g[r],x}/(\mathfrak{m}_x^k)$ for some $x$. It then follows that $\tau(C)$ is irreducible for every Hecke operator $\tau$, and since $\tau$ is \'etale $\tau(C)$ cannot be contained in $V$.

Next, by Lemma \ref{ellheckedegree} we have \begin{equation}\label{equationupperbound}
(\tau(C) \cdot V) = \deg(\tau) (C \cdot V).    
\end{equation}
% This is true when $V = H$ . In general, we note that $\Ag$ has Picard group $\Z$, and hence $(C'\cdot V) = n_{V}(C'\cdot H)$ for any proper curve $C\subset \Ag$, where the divisors $[H],[V]$ satisfy $n_V[H] = V$ in $\textrm{Pic}(\Ag)$.
By Lemma \ref{pointlargedegreelargemonodromy}, there exists a Galois extension $K/Q$ of degree $\geq n$ and a primitive point $x\in C(K)$ whose monodromy has index bounded by $c$ in $\GSp_{2g}(\hat{\Z})$, where we now choose $n > c (C\cdot V).$ For any fixed Hecke operator $\tau$, we have that $\tau(x)$ is stabilized by $\Gal(\overline{K}/K)$, and the monodromy assumption on $A_x$ implies that each orbit has at least $\frac{\deg(\tau)}{c}$ elements. The same statement holds for $\sigma(x)$ with $\sigma \in \Gal(K/\Q)$, and further, we have that the sets $\tau(x)$ and $\tau(\sigma(x))$ are in the same $\Gal(\overline{\Q}/\Q)$ orbit. 

Suppose now that $\tau(x)$ intersected $V$ non-trivially for some Hecke operator $\tau$. As we have assumed $V$ is defined over $\Q$, it follows that $\tau(x) \cap V$ is stable under $\Gal(\overline{\Q}/\Q)$.

 The previous paragraph implies that $\tau(\sigma(x)) \cap V$ has size $\geq \frac{\deg(\tau)}{c}$ for every $\sigma\in \Gal(K/\Q)$. Further, even if the same point $y$ is contained\footnote{This can happen if $\sigma_1(x)$ is isogenous to $\sigma_2(x)$.} in $\tau(\sigma_1(x)) \cap V$ and $\tau(\sigma_2(x)) \cap V$ for $\sigma_1\neq\sigma_2$, this point contributes twice to the intersection $\tau(C) \cdot V$, because this implies that $\tau(C)$ must be singular at $y$, with branches corresponding to the branch of $C$ through $\sigma_1(x)$ and through $\sigma_2(x)$. It therefore follows that 
 \begin{equation}\label{equationlowerbound}
 (\tau(C)\cdot V) \geq \frac{\deg(\tau)}{c}\cdot n.
 \end{equation}
 But our initial choice of $n$ was such that $n> c (C\cdot V)$, and so Equations \eqref{equationupperbound} and \eqref{equationlowerbound} cannot both hold simultaneously. It therefore follows that $\tau(x)$ must be disjoint from $V$ for every Hecke operator $\tau$, and the result follows.

\end{proof}

\subsection{An analogue over $\Fpbar$.}
We will now prove that for $D\subset \cA_g[r]$ a divisor, there exists an ordinary $x\in \cA_g[r](\Fpbar)$ such that $\tau(x) \not\subset D$ for every prime-to-$p$ Hecke operator $\tau$. 

\begin{proof}[Proof of Theorem \ref{finitefields}]
By the main result of this paper, there exists a generically ordinary $C\subset \cA_g[r]$ that satisfies $\tau(C)\not\subset D$ for every Hecke operator $T$. An identical intersection-theoretic argument as in the proof of Theorem \ref{mainnumberfield} yields that for a point $x\in C(\Fpbar)$, whose minimal field of definition is $\F_q$ where $q$ is a large enough power of $p$, $\tau(x)\not\subset D$. The result follows. 

\end{proof}

\bibliographystyle{alpha}

\end{document}